\tikzset{mybrace/.style={decoration={brace,raise=1.8mm},decorate}}
\tikzset{mybracedown/.style={decoration={brace,mirror,raise=1.8mm},decorate}}
\newcounter{thm} \numberwithin{thm}{section}
\newtheorem{theorem}[thm]{Theorem}
\newtheorem{lemma}[thm]{Lemma}
\newtheorem{definition}[thm]{Definition}
\newtheorem*{claim}{Claim}
\renewcommand{\bar}[1]{\overline{#1}}
\newcommand{\R}{\mathbb{R}}
\DeclareRobustCommand*\uell{\mathpalette\@uell\relax}
\newcommand*\@uell[2]{
  \setbox0=\hbox{$#1\ell$}
  \setbox1=\hbox{\rotatebox{12}{$#1\ell$}}
  \dimen0=\wd0 \advance\dimen0 by -\wd1 \divide\dimen0 by 2
  \mathord{\lower 0.1ex \hbox{\kern\dimen0\unhbox1\kern\dimen0}}
}
\renewcommand{\ln}[0]{\operatorname{\uell n}}
\title{Convexity, Squeezing, and the Elekes-Szab\'{o} Theorem}
\date{January 1, 2024}
\author[O. Roche-Newton]{Oliver Roche-Newton} 
\address{Insitute for Algebra, Johannes Kepler Universit\"{a}t, Linz, Austria}
\email{o.rochenewton@gmail.com}
\author[E. Wong]{Elaine Wong}
\address{Comp. Sci. \& Math Division, Oak Ridge National Laboratory, Oak Ridge, TN, USA}
\email{wongey@ornl.gov}
\begin{document}

\maketitle

\begin{abstract}

This paper explores the relationship between convexity and sum sets. In particular, we show that elementary number theoretical methods, principally the application of a squeezing principle, can be augmented with the Elekes-Szab\'{o} Theorem in order to give new information. Namely, if we let $A \subset \mathbb R$, we prove that there exist $a,a' \in A$ such that
\[
\left | \frac{(aA+1)^{(2)}(a'A+1)^{(2)}}{(aA+1)^{(2)}(a'A+1)} \right | \gtrsim |A|^{31/12}.
\]
We are also able to prove that
\[
\max \{|A+ A-A|, |A^2+A^2-A^2|, |A^3 + A^3 - A^3|\} \gtrsim |A|^{19/12}.
\]
Both of these bounds are improvements of recent results and takes advantage of computer algebra to tackle some of the computations.

\end{abstract}

\section{Introduction}

We choose to begin with some boring stuff up front in order to avoid any awkwardness between us that may originate from cultural differences in mathematical notation. Throughout this paper\footnote{This is the authors' version of the work. It is posted here for your personal use. Not for redistribution. The definitive version was published in the {Electronic Journal of Combinatorics}, Volume~31, Issue~1, Article Number P1.3 (2024). \url{https://doi.org/10.37236/11331}.}, the notation $X\gg Y$, $Y \ll X,$ $X=\Omega(Y)$, and $Y=O(X)$ are all equivalent and mean that $X\geq cY$ for some absolute constant $c>0$. $X \approx Y$ and $X=\Theta (Y)$ denote that both $X \gg Y$ and $X \ll Y$ hold. $X \gg_a Y$ means that the implied constant is no longer absolute, but depends on $a$. We also use the notation $X \gtrsim Y $ and $Y \lesssim X$ to denote that $X \gg Y/(\log Y)^c$ for some absolute constant $c>0$. Unless otherwise stated, all logarithms are in base 2 and we will use the notation $\ln$ for logs in base $e$. 

Now for the fun stuff. An important generalization of the sum-product phenomenon is the idea that strictly convex or concave functions destroy additive structure.\footnote{For the sake of completeness and conciseness, it can be assumed that a convexity result has an analogous concavity result and we will only mention the former in statements of theorems and in the proofs.} For instance, Elekes, Nathanson, and Ruzsa \cite{ENR} used incidence geometry to prove that the bound
\begin{equation*} \label{ENR}
|A+A|^2 |f(A)+f(A)|^2 \gg  |A|^5
\end{equation*}
holds for any $A\subset \mathbb R$ and any strictly convex function $f$.

A recent trend in this area of sum-product theory has seen elementary methods play a more prominent role. These elementary methods have their origins in the work of Ruzsa, Shakan, Solymosi, and Szemer\'{e}di \cite{RSSS}, who gave a simple and beautiful proof of the fact that
\begin{equation} \label{RSSS}
|A+A-A| \gg |A|^2
\end{equation}
holds for any convex set $A \subset \mathbb R$. A set $A$ is said to be (\textit{strictly}) \textit{convex} if it has strictly increasing consecutive differences. That is, if we write $A=\{a_1<a_2< \dots <a_n \}$ then $a_i-a_{i-1} < a_{i+1} - a_i$  holds for all $2 \leq i \leq n-1$. Note that the bound \eqref{RSSS} is optimal up to constant factors, as can be seen by taking $A=\{1,4,\dots,n^2 \}$.

For the purposes of our work, we will apply an elementary ``squeezing" method to two different number-theoretic problems. This method was the foundation for the proof of~\eqref{RSSS}, and has already been further developed in a series of recent papers (see \cite{B}, \cite{BHR}, \cite{HRNR}, \cite{HRNS}, \cite{M}). 

\subsection{Products and Shifts}
The squeezing method was used in~\cite{HRNS} to give lower bounds for expanders involving convex and `superconvex' functions. Informally, a superconvex function is a strictly convex function which has a strictly convex first derivative (this notion was considered in much more detail in~\cite{HRNR}). The following result was established in \cite{HRNS}: for any $X \subset \mathbb R$, there exist $x,x' \in X$ such that
\begin{equation} \label{shiftsold}
\left | \frac{(xX+1)^{(2)}(x'X+1)^{(2)}}{(xX+1)^{(2)}(x'X+1)} \right | \gtrsim |X|^{5/2}.
\end{equation}
The notation $U^{(k)}$ is used for the $k$-fold product set of $U$, that is, 
\[
U^{(k)}:=\{u_1\cdots u_k\colon u_1,\ldots,u_k\in U\}.
\]
It is perhaps not obvious why the bound \eqref{shiftsold} has anything to do with convexity. However, this is in fact a special case of a more general result involving growth of the set $f(a+A)$ under addition, see \cite[Theorem 2.6]{HRNS}. Then the bound \eqref{shiftsold} follows by considering the strictly convex function $f(x)=\ln(e^x+1)$ and $A= \ln X$. In this paper, we give an improved bound for this expander.

\begin{theorem} \label{thm:main2}
For any finite set $X \subset \mathbb R$, there exists $x,x' \in X$ such that
\begin{equation*} \label{shiftsnew}
\left | \frac{(xX+1)^{(2)}(x'X+1)^{(2)}}{(xX+1)^{(2)}(x'X+1)} \right | \gtrsim |X|^{31/12}.
\end{equation*}
\end{theorem}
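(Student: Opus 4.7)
The plan is to combine the logarithmic squeezing method used for the $|X|^{5/2}$ bound in \cite{HRNS} with the Elekes-Szab\'{o} theorem to squeeze out an extra factor of $|X|^{1/12}$. After restricting to positive $X$, set $f(t) = \ln(e^t+1)$ (strictly convex) and $A = \ln X$. Taking logarithms, with $U := f(a+A)$ and $V := f(a'+A)$ for $a = \ln x$ and $a' = \ln x'$, the inequality to prove becomes
\[
|2U + 2V - 2U - V| \gtrsim |A|^{31/12}
\]
for some $a, a' \in A$. I would assume for contradiction that this sumset has size $\leq K := |A|^{31/12}$ (up to polylog factors) for every $a, a' \in A$, so that Pl\"{u}nnecke-Ruzsa controls all iterated sumsets of $U$ and $V$ at bounded depth in terms of $K$.

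Next I would run the squeezing argument. The key observation is that when $f$ is strictly convex and $a \neq a'$, the map $t \mapsto f(a+t) - f(a'+t)$ is strictly monotone in $t$, so the equation $f(a+t_1) + f(a'+t_2) = f(a+t_3) + f(a'+t_4)$ forces the diagonal solutions $t_1 = t_3,\ t_2 = t_4$. Combined with the smallness of the sumset, a squeezing argument in the spirit of \cite{RSSS} produces a large family of tuples in $A^4$ (or $A^6$) satisfying an explicit algebraic equation $F = 0$, with $F$ a polynomial built from $f$ and the shift parameters $a, a'$.

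The crucial step is to invoke the Elekes-Szab\'{o} theorem, which bounds the number of solutions of such an algebraic equation by a subexponent power of $|A|$, provided $F$ does not take a special group-related form. Balancing this algebraic upper bound on the number of incidences against the squeezing lower bound yields the claimed exponent $31/12$ in place of $5/2$.

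The principal obstacle will be verifying the Elekes-Szab\'{o} non-degeneracy hypothesis for the specific polynomial $F$ coming from $f(t) = \ln(e^t+1)$: one must show that $F$ does not take the exceptional form that would allow abundantly many solutions. This reduces to checking that a certain algebraic invariant of $f$ (a Schwarzian-type derivative, or a determinant of mixed partial derivatives evaluated along the variety $F = 0$) does not vanish identically. The abstract's reference to computer algebra strongly suggests this is where symbolic computation is used: once the invariant is verified to be nonzero by computer, the quantitative bound falls out from standard sumset arithmetic.
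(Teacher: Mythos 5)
Your high-level skeleton matches the paper: rewrite the problem via $f(t)=\ln(e^t+1)$ and $A=\ln X$, run a squeezing argument, feed an algebraic relation into Elekes--Szab\'o, and check the non-degeneracy hypothesis with a derivative test assisted by computer algebra. But there is a genuine gap in the middle, where you invoke a Pl\"unnecke--Ruzsa contradiction argument and an appeal to ``monotonicity forcing diagonal solutions.'' Neither is the mechanism used, and I don't see how either closes the argument.

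Concretely: the paper does not argue by contradiction and does not control iterated sumsets via Pl\"unnecke--Ruzsa. Instead it first fixes $a_i,a_{i+3}\in A$ minimizing $a_{i+3}-a_i$ and translates this pair by every third element $a_{3j}$ of $A$, producing $\Theta(n)$ disjoint, congruent intervals with all four endpoints in $A+A$. Applying the strictly convex $f$ makes each of the three sub-intervals $c_j,d_j,e_j$ (first, middle, and last thirds after applying $f$) strictly increasing in $j$. A two-fold squeezing lemma (Lemma~4.1 in the paper) is applied three times to get
\[
|2f(a+A)+2f(a'+A)-2f(a+A)-f(a'+A)|\gtrsim n^2\max\{|\Gamma|,|\Delta|,|\mathcal E|\},
\]
where $\Gamma,\Delta,\mathcal E$ are sets of \emph{consecutive differences} of the sequences $(c_j),(d_j),(e_j)$ restricted to a common index set $I_3$. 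The Elekes--Szab\'o input is then applied not to count tuples in $A^4$ or $A^6$, but to the single trivariate polynomial $G(\bar x,\bar y,\bar z)=0$ one gets after exponentiating the three equations $\gamma=c_{j+1}-c_j$, $\delta=d_{j+1}-d_j$, $\varepsilon=e_{j+1}-e_j$ and eliminating $s_{3j},s_{3(j+1)}$ with Singular. The lower bound $S=|I_3|\gtrsim n$ on solutions plays off against the $O(\max\{|\Gamma|,|\Delta|,|\mathcal E|\}^{12/7})$ upper bound from Theorem~2.1 to yield $\max\{|\Gamma|,|\Delta|,|\mathcal E|\}\gg n^{7/12}$, hence the exponent $2+7/12=31/12$.

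The step your proposal is missing is exactly this reduction to a three-variable polynomial over the Cartesian product $e^\Gamma\times e^\Delta\times e^{\mathcal E}$; it is what makes Elekes--Szab\'o applicable and is where the computer algebra enters (both for the elimination producing $F=(x-1)(z-1)G$ and for the second-derivative non-degeneracy test for $G$). Without identifying the three difference sets $\Gamma,\Delta,\mathcal E$ and the double-counting over the index $j$, the Pl\"unnecke--Ruzsa contradiction you suggest has no variety to count on, and the ``diagonal solutions'' observation does not produce an equation in a form Theorem~2.1 can bound.
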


The primary motivation for proving the bound \eqref{shiftsold} in \cite{HRNS} was that it could be used to give a better non-trivial lower bound for the number of dot products determined by a point set in the Euclidean plane. By using Theorem \ref{thm:main2} in place of \eqref{shiftsold} in the analysis of~\cite{HRNS}, one can obtain a further quantitative improvement to the main result of \cite{HRNS}. However, we will not explore this in detail here. Rather, we choose to focus on highlighting the methods that allow us to improve the bound \eqref{shiftsold}.

\subsection{Two Convex Functions}

In \cite{HRNR}, the same squeezing technique was used to prove that
\begin{equation} \label{HRNR}
|A+A-A||f(A)+f(A)-f(A)| \gg \frac{|A|^{3}}{ \log |A| }
\end{equation}
holds for any $A \subset \mathbb R$ and any strictly convex function $f$. In particular, it follows that
\begin{equation} \label{max}
\max \{ |A+A-A| , |f(A) + f(A) - f(A)| \} \gtrsim |A|^{3/2}.
\end{equation} 
The bound \eqref{HRNR} is also tight, up to constant and logarithmic factors, as is illustrated by the case when $A= [n]$ for some positive integer $n$ and $f(x)=x^2$. Moreover, a recent paper of Bradshaw \cite{B} removed the logarithmic factors from \eqref{HRNR}.

An interesting open problem is to give a version of \eqref{max} with an exponent strictly greater than $3/2$. Although we are unable to give such a result, we do obtain an improvement by instead simultaneously considering two different convex functions.

\begin{theorem} \label{thm:main1}
For any $A \subset \mathbb R$,
\[
\max \{|A+A-A|, |A^2+A^2-A^2|, |A^3+A^3-A^3| \} \gtrsim |A|^{19/12}.
\]
\end{theorem}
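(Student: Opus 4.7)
The plan is to extract joint information from $A$, $A^2$, and $A^3$ via a two-layer argument: first, a simultaneous squeezing argument to produce many additive coincidences across all three functions; second, an Elekes-Szab\'o estimate to upper bound the number of such coincidences. I would begin by running the gap-interval squeezing technique of \cite{HRNR} in parallel for $A$, $A^2$, and $A^3$. Writing $A=\{a_1<\cdots<a_n\}$, the squeezing associates to a structured family of triples $(a_i,a_j,a_k)$ a joint combinatorial label recording which consecutive gap of $A$, $A^2$, and $A^3$ the respective combinations $a_i+a_j-a_k$, $a_i^2+a_j^2-a_k^2$, $a_i^3+a_j^3-a_k^3$ fall into. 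Under the hypothesis that each of the three sumsets has cardinality at most $|A|^{19/12}$ up to logs, dyadic pigeonholing over these labels produces a large collection of triples sharing a common label, and hence many non-trivial sextuples $(a_1,a_2,a_3,b_1,b_2,b_3) \in A^6$ satisfying the simultaneous equations
\[
a_1+a_2-a_3 = b_1+b_2-b_3,
\]
\[
a_1^2+a_2^2-a_3^2 = b_1^2+b_2^2-b_3^2,
\]
\[
a_1^3+a_2^3-a_3^3 = b_1^3+b_2^3-b_3^3.
\]

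Next, I would upper bound the number of such sextuples via an appropriate form of the Elekes-Szab\'o theorem. Eliminating $a_3$ and $b_3$ using the first (linear) equation reduces the system to two polynomial equations in four variables, cutting out a two-dimensional algebraic surface in $\R^4$; projecting to an appropriate four-tuple of the original six coordinates, one obtains a single polynomial relation in four variables amenable to a Raz-Sharir-de Zeeuw-style four-variable Elekes-Szab\'o bound. The resulting estimate beats the trivial counting bound provided the defining polynomial is not of the special ``group-like'' form coming from algebraic group laws, and this non-degeneracy check is the step that is delegated to computer algebra, consistent with the authors' hint in the abstract.

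Finally, comparing the lower bound on coincidences from the squeezing step with the Elekes-Szab\'o upper bound, possibly together with input from \eqref{HRNR} applied to $f(x)=x^2$ and to $f(x)=x^3$, and optimizing over parameters yields the exponent $19/12$. The main obstacle is the Elekes-Szab\'o step: the polynomial obtained after elimination is algebraically intricate, and confirming that it is not of the exceptional form---in every natural choice of which variables to eliminate and which to retain---is exactly the sort of calculation best handled by symbolic computation. It is this computer-verified non-degeneracy certificate that upgrades the prior $3/2$ exponent to $19/12$.
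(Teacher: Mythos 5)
Your high-level plan -- combine a squeezing argument with an Elekes--Szab\'o-type bound -- matches the paper's strategy, but the concrete reduction you propose is genuinely different from what the paper does, and I believe it has a gap that would prevent it from producing the exponent $19/12$.

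The paper's central algebraic device (its Lemma~\ref{lem:main}) is to work with a \emph{pair} $(a,b)$ and the associated \emph{triple of differences} $(c,d,e)=(a-b,\,a^2-b^2,\,a^3-b^3)$. The first two equations determine $a$ and $b$ from $(c,d)$ when $c\neq 0$, so the map $(a,b)\mapsto(c,d,e)$ is injective on off-diagonal pairs, and eliminating $a,b$ from the third equation yields the single three-variable relation $4ce=3d^2+c^4$. This places the problem squarely in the standard three-variable Elekes--Szab\'o framework of Theorem~\ref{thm:11/6}, giving $|G|\le |Z(F)\cap(C\times D\times E)|\ll \max\{|C|,|D|,|E|\}^{12/7}$, hence $\max\gg|G|^{7/12}$, which after the squeezing factor of $N$ gives $N^{1+7/12}=N^{19/12}$. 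Your proposal instead counts \emph{sextuples} $(a_1,a_2,a_3,b_1,b_2,b_3)\in A^6$ satisfying three simultaneous equations, and then tries to eliminate down to a four-variable polynomial relation to which some ``four-variable Elekes--Szab\'o'' could be applied. This is a different, energy-flavored count, and the elimination step does not straightforwardly produce a single polynomial in four variables: after using the linear equation to remove one unknown you still have two equations in five unknowns cutting out a three-fold, and whether a projection to four coordinates is generically finite and lands on a hypersurface of manageable degree is not justified. More importantly, the four-variable Elekes--Szab\'o theory is not the one stated in the paper (Theorem~\ref{thm:11/6} is three-variable), has different exponents, and would not automatically return $12/7$, so the arithmetic $1+7/12=19/12$ is not recovered by your route.

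There is also a gap on the combinatorial side. The paper's squeezing (Lemma~\ref{lem:squeeze}) works only with \emph{consecutive pairs} $(a_i,a_{i+1})$, organized by the value of the consecutive gap, and the proof performs \emph{three nested dyadic pigeonholings} to produce graphs $H\supset H'\supset H''$, each with $\gtrsim N$ edges, so that $|A^k+A^k-A^k|\gtrsim N\cdot|A^k-_{H^{(k)}}A^k|$ holds for all three powers simultaneously; Lemma~\ref{lem:main} is then applied to the final graph $H''$. Your description of labeling arbitrary triples $(a_i,a_j,a_k)$ by which consecutive gap $a_i+a_j-a_k$ falls into, and finding a popular label, is both vaguer and structurally different: you lose the crucial injectivity that lets the paper bound $|G|$ directly by the number of lattice points on the three-variable surface, and you do not extract the factor of $N$ that the nested-gap squeezing provides. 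So while you correctly identified the two tools and even the flavor of the computer-algebra non-degeneracy check, you are missing the key reduction -- the injective map to a three-variable difference triple satisfying $4ce=3d^2+c^4$ -- without which the proposed argument does not close.
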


\section{The Elekes-Szab\'{o} Theorem}

We will prove Theorems \ref{thm:main2} and \ref{thm:main1} in the next two sections using a ``squeezing'' idea originating from \cite{RSSS}. However, we would first like to introduce an additional tool in the form of the Elekes-Szab\'{o} Theorem in order to make new quantitative progress with these problems. The Elekes-Szab\'{o} Theorem gives a bound for the size of the intersection of a Cartesian product and a suitably non-degenerate algebraic surface. The first non-trivial bound for this problem was given by Elekes and Szab\'{o} in \cite{ES}. We will use the most recent quantitative version of this result, which is due to Solymosi and Zahl \cite{SZ}, building on previous work of Raz, Sharir, and de Zeeuw \cite{RSdZ}.

\begin{theorem} \label{thm:11/6}
Let $F \in \R[x,y,z]$ be a non-degenerate irreducible polynomial of degree $d$, such that none of the partial derivatives $\frac{\partial F}{\partial x}, \frac{\partial F}{\partial y}, \frac{\partial F}{\partial z}$ vanish. Then, for all $A,B,C\subseteq \R$ such that $|A| \leq |B| \leq |C|$,
\begin{equation}\label{eq:RSZ11/6}
|Z(F) \cap (A \times B \times C)| = O_d((|A||B||C|)^{4/7}+|B ||C|^{1/2}).
\end{equation}
In particular,
\[
|Z(F) \cap (A \times B \times C)| = O_d( \max \{|A|,|B|,|C| \}^{12/7} ).
\]
\end{theorem}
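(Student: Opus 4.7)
The first inequality is the Solymosi--Zahl refinement \cite{SZ} of the Raz--Sharir--de Zeeuw Elekes--Szab\'{o} bound \cite{RSdZ}, so I would not attempt to reprove it from scratch. In broad outline the argument goes as follows: one slices the surface $Z(F)$ into a one-parameter family of algebraic curves in the plane (for example, by fixing the third coordinate), then uses the non-degeneracy of $F$ together with the hypothesis that no partial derivative vanishes to check that the family has the expected number of degrees of freedom and that no individual curve absorbs too many points of the remaining Cartesian product. A Pach--Sharir / Sharir--Solymosi style incidence bound for points and curves of bounded degree, combined with a third-moment estimate, then yields the stated upper bound after optimisation of parameters. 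Because this argument runs to several pages and is substantially outside the elementary squeezing machinery used in the rest of the paper, I would invoke it as a black box.

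The ``In particular'' statement is then a direct computation from the first inequality. Under the ordering $|A| \le |B| \le |C|$, set $N = \max\{|A|,|B|,|C|\} = |C|$. Then
\[
(|A||B||C|)^{4/7} \le (N\cdot N\cdot N)^{4/7} = N^{12/7},
\]
and
\[
|B||C|^{1/2} \le N\cdot N^{1/2} = N^{3/2} \le N^{12/7},
\]
the last step because $3/2 = 21/14 < 24/14 = 12/7$. Summing the two terms in \eqref{eq:RSZ11/6} and absorbing the constant $2$ into the $O_d$ notation gives the claimed bound $O_d(N^{12/7})$.

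The main obstacle is therefore concentrated entirely in the first inequality, which is a genuine incidence-geometric result; the second bound is a trivial consequence once the ordering is used to collapse the three sizes into a single parameter. Since Theorem~\ref{thm:11/6} will be applied only as a black box inside the squeezing arguments in the next two sections, I would write the proof as a short citation of \cite{SZ} followed by the two-line computation above.
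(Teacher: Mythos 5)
Your proposal matches the paper's treatment exactly: the bound \eqref{eq:RSZ11/6} is quoted as a black box from Solymosi and Zahl \cite{SZ}, and the ``In particular'' statement is the same short computation collapsing $|A|,|B|,|C|$ to $N=|C|$ and using $3/2 < 12/7$. Nothing to add.
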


We are often interested in the case when $|A|=|B|=|C|=n$, in which case \eqref{eq:RSZ11/6} simplifies to become
\[
|Z(F) \cap (A \times B \times C)| \ll_d n^{12/7}.
\]
On the other hand, a simple construction of \cite{MRNWdZ} shows the existence of a non-degenerate quadratic polynomial $F: \mathbb R^3 \rightarrow \mathbb R$ and a set $A$ of cardinality $n$ with
\[
|Z(F) \cap (A \times A \times A)| \gg n^{3/2}.
\]

Of course, to understand Theorem \ref{thm:11/6}, one needs to know what it means for $f$ to be non-degenerate. 

\begin{definition}
\label{def:nondegen} A polynomial $F \in \R[x,y,z]$ is \textit{degenerate} if there exists a one-dimensional sub-variety $Z_0 \subseteq Z(F)$ such that for all $v\in Z(F)\setminus Z_0$, there are open intervals $I_1,I_2,I_3\subseteq \R$ and injective real-analytic functions $\phi_i:I_i\rightarrow \R$ with real-analytic inverses ($i = 1,2,3$) such that $ v \in I_1\times I_2\times I_3$ and for any $(x,y,z)\in I_1\times I_2\times I_3$, we have
\[ (x, y, z) \in Z(F) \text{ if and only if } \phi_1(x) + \phi_2(y) + \phi_3(z) = 0\,.\]
Otherwise, we say that $F$ is \textit{non-degenerate}.
\end{definition}

This definition is rather technical. However, it may be helpful for the reader to consider an example of a function for which Theorem \ref{thm:11/6} cannot hold (and which therefore must be degenerate). If we take $F(x,y,z)=x+y-z$ and $A=B=C=[n]$, then it is not difficult to check that 
\[
| \{ (a,b,c) \in A \times B \times C : F(a,b,c)=0 \}| \gg n^{2}.
\]
In this case, the surface $Z(F)$ is a hyperplane. Theorem \ref{thm:11/6} therefore tells us that this $F$ must be degenerate. We can also see this by looking at Definition \ref{def:nondegen} directly; here we set $\phi_1(t)= \phi_2(t)=t$ and $\phi_3(t)=-t$. This construction can be modified in many ways. For instance, one can take $F(x,y,z)=x^2+y^2-z^2$ and $A=B=C=\{ \sqrt 1, \sqrt 2, \dots, \sqrt n \}$. Or one can take $F(x,y,z)=xyz$ and $A=B=C= \{ 2^i : i \in \pm [n] \}$. These examples are based on the same common idea, that the surface is a kind of deformation of a hyperplane, and this is captured conveniently by Definition \ref{def:nondegen}. 

For the case when the expression $F(x,y,z)=0$ can already be rearranged into the form $z=f(x,y)$ where $f$ is a polynomial, the definition of non-degeneracy becomes more straightforward (see \cite[Theorem 2]{RSS}). However, for the general case, it is not always immediately obvious whether a given polynomial is degenerate or non-degenerate. To help with this task, we will use an idea introduced by Elekes and R\'onyai, which is that non-degeneracy can be verified using the following derivative test~\cite{ER00}. We include a proof here for completeness, which is essentially taken from \cite[Lemma~33]{ES}. 

\begin{lemma}\label{lem:difftest}
Let $f:\R^2\to \R$ be a bivariate twice-differentiable real function and let $U$ be a nonempty open subset of $\emph{Dom}(f)\setminus(\lbrace f_y=0\rbrace \cup\lbrace f_x=0\rbrace).$ Suppose that there exist (twice-differentiable) univariate real functions $\psi, \varphi_1,\varphi_2$ such that
\[
 f(x,y) = \psi(\varphi_1(x) + \varphi_2(y)).
\]
Then
\begin{equation}\label{eq:test}
\frac{\partial^2\left(\ln|f_x/f_y|\right)}{\partial x\partial y}
\end{equation}
is identically zero on $U$.
\end{lemma}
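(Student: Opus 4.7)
The plan is a direct chain-rule computation exploiting the key feature of the ansatz $f(x,y) = \psi(\varphi_1(x) + \varphi_2(y))$, namely that the outer function $\psi$ contributes the same factor to both $f_x$ and $f_y$ and hence cancels in the ratio.

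First, set $s = \varphi_1(x) + \varphi_2(y)$ and differentiate. The chain rule immediately gives
\[
f_x(x,y) = \psi'(s)\,\varphi_1'(x), \qquad f_y(x,y) = \psi'(s)\,\varphi_2'(y).
\]
Since $U$ is disjoint from the zero sets of $f_x$ and $f_y$, we have $\psi'(s) \neq 0$, $\varphi_1'(x) \neq 0$, and $\varphi_2'(y) \neq 0$ on $U$, so all the ratios and logarithms that follow are well-defined. Dividing and taking absolute values removes the common factor:
\[
\left|\frac{f_x}{f_y}\right| = \frac{|\varphi_1'(x)|}{|\varphi_2'(y)|}.
\]

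Next, take the logarithm to turn this into a sum:
\[
\ln\left|\frac{f_x}{f_y}\right| = \ln|\varphi_1'(x)| - \ln|\varphi_2'(y)|.
\]
The right-hand side is a separated sum of a function of $x$ alone and a function of $y$ alone. Consequently, differentiating once with respect to $y$ kills the $\ln|\varphi_1'(x)|$ term, and differentiating the resulting function (which depends only on $y$) with respect to $x$ gives zero. This yields
\[
\frac{\partial^2\left(\ln|f_x/f_y|\right)}{\partial x\,\partial y} \equiv 0
\]
on $U$, as required.

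There is essentially no obstacle here: the argument is a one-line consequence of the chain rule once the identity $f_x/f_y = \varphi_1'(x)/\varphi_2'(y)$ is written down. The only thing worth checking carefully is that the hypotheses guarantee we are never dividing by zero or taking $\ln 0$, which is exactly why $U$ is assumed to avoid $\{f_x = 0\} \cup \{f_y = 0\}$; twice-differentiability of $\psi$, $\varphi_1$, $\varphi_2$ is what makes the mixed partial of $\ln|f_x/f_y|$ meaningful in the first place.
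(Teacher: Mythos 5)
Your proof is correct and follows essentially the same approach as the paper: compute $f_x$ and $f_y$ by the chain rule, observe that $\psi'$ cancels in the ratio so that $f_x/f_y = \varphi_1'(x)/\varphi_2'(y)$, and then note that the mixed partial of the logarithm vanishes because the variables have separated. The paper differentiates $\ln|f_x/f_y|$ in $x$ first and observes the result is independent of $y$, while you take the logarithm to split it into a sum $\ln|\varphi_1'(x)| - \ln|\varphi_2'(y)|$ up front; these are the same calculation presented in a slightly different order.
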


\begin{proof}
First, we can calculate the partial derivatives $f_x$ and $f_y$ using the chain rule to get
\begin{align*}
f_x &=\varphi_1'(x) \cdot \psi'( \varphi_1(x) + \varphi_2(y)),
\\ f_y &=\varphi_2'(y) \cdot \psi'( \varphi_1(x) + \varphi_2(y)).
\end{align*}
Therefore, 
\[
\frac{f_x}{f_y} = \frac{\varphi_1'(x) }{\varphi_2'(y) }.
\]
We can then calculate the form of \eqref{eq:test}. Our assumptions on the set $U$ ensure that this expression can be evaluated for all $(x,y) \in U$. First, we can use the chain rule to calculate the partial derivative with respect to $x$. We obtain
\[
\frac{\partial\left(\ln|f_x/f_y|\right)}{\partial x} = \frac{f_y}{f_x} \cdot \frac{ \varphi_1''(x)}{\varphi_2'(y)} = \frac{ \varphi_1''(x)}{\varphi_1'(x)}.
\]
This function does not depend on $y$, and so differentiating with respect to $y$ gives a function which is identically zero. We conclude that
\[ 
\frac{\partial^2\left(\ln|f_x/f_y|\right)}{\partial x\partial y} (x,y)=0
\]
for all $(x,y) \in U$, as required.
\end{proof}

In practice, Lemma \ref{lem:difftest} allows us to test the whether or not the polynomial $F(x,y,z)$ is degenerate by rearranging the expression $F(x,y,z)=0$ into the form $z=f(x,y)$, computing the resulting expression \eqref{eq:test}, and checking whether it is identically zero on an open set of $\R^2$. This characterization will be used repeatedly, and so we record it in the form of the next lemma. The result is implicit in earlier work going back to \cite{ER00}, but since we are not aware of it being stated in this form, we give the full statement and its proof here.

\begin{lemma} \label{lem:test}
Suppose that $F: \mathbb R^3 \rightarrow \mathbb R$ is a degenerate polynomial. Let $f(x,y)$ be a twice differentiable real function with $f_x$ and $f_y$ not identically zero such that
\begin{equation} \label{iff1}
z=f(x,y) \Leftrightarrow F(x,y,z)=0.
\end{equation}
Then there exists a nonempty open set $U \subset \mathbb R^2$ such that
\[
\frac{\partial^2\left(\ln|f_x/f_y|\right)}{\partial x\partial y}
\]
is identically zero on $U$.
\end{lemma}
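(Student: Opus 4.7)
The plan is to combine the degeneracy hypothesis with Lemma \ref{lem:difftest}: the degeneracy data will let us write $f(x,y) = \psi(\varphi_1(x) + \varphi_2(y))$ locally, at which point Lemma \ref{lem:difftest} finishes the job. Since $F$ is degenerate, Definition \ref{def:nondegen} produces a one-dimensional sub-variety $Z_0 \subseteq Z(F)$ such that near every $v \in Z(F) \setminus Z_0$, the zero set is cut out locally by an equation of the form $\phi_1(x) + \phi_2(y) + \phi_3(z) = 0$, with each $\phi_i$ real-analytic and invertible with real-analytic inverse. The entire task is to transfer this additively separated description onto the implicit function $f$.

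First I would select a base point $(x_0, y_0)$ at which both $f_x$ and $f_y$ are nonzero, set $z_0 = f(x_0, y_0)$, and arrange that $v := (x_0, y_0, z_0)$ lies in $Z(F) \setminus Z_0$. Existence of such a point relies on $f$ being (locally) real-analytic wherever $F_z \neq 0$, via the analytic implicit function theorem applied to the polynomial $F$; analyticity promotes the hypothesis $f_x, f_y \not\equiv 0$ to the stronger statement that $V := \{f_x \neq 0\} \cap \{f_y \neq 0\}$ is open and dense. Since the graph of $f$ over $V$ is a two-dimensional subset of $Z(F)$, it cannot be contained in the one-dimensional $Z_0$, so we can pick $(x_0, y_0) \in V$ with $v \notin Z_0$.

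Applying Definition \ref{def:nondegen} at $v$ yields open intervals $I_1, I_2, I_3$ and real-analytic functions $\phi_i$ as above, so that $Z(F) \cap (I_1 \times I_2 \times I_3)$ coincides with the locus $\{\phi_1(x) + \phi_2(y) + \phi_3(z) = 0\}$. Since $\phi_3$ is analytically invertible on $I_3$, this locus is precisely the graph $z = \phi_3^{-1}(-\phi_1(x)-\phi_2(y))$. Shrinking to an open $U \subseteq I_1 \times I_2$ around $(x_0, y_0)$ on which $f$ maps into $I_3$, the equivalence \eqref{iff1} forces $(x, y, f(x,y)) \in Z(F)$, so by uniqueness of the graph, $f(x,y) = \psi(\varphi_1(x) + \varphi_2(y))$ on $U$, where $\psi(t) = \phi_3^{-1}(-t)$, $\varphi_1 = \phi_1$, and $\varphi_2 = \phi_2$. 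Shrinking $U$ once more so that $f_x$ and $f_y$ remain nonzero on it, Lemma \ref{lem:difftest} applies verbatim and delivers the claimed vanishing.

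The main obstacle I expect is the bookkeeping around the choice of base point: one has to simultaneously arrange that $(x_0, y_0)$ avoids the bad loci $\{f_x = 0\} \cup \{f_y = 0\}$, that the lift $v = (x_0, y_0, f(x_0, y_0))$ avoids the one-dimensional exceptional subvariety $Z_0$, and that the branch of $z$ selected by the function $f$ coincides with the branch produced by solving $\phi_3(z) = -\phi_1(x) - \phi_2(y)$. These are topological rather than computational issues, but they are what convert the abstract data from Definition \ref{def:nondegen} into a clean input for Lemma \ref{lem:difftest}.
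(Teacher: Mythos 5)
Your proposal is correct and takes essentially the same route as the paper's proof: use the degeneracy data from Definition \ref{def:nondegen} to write $f(x,y)=\psi(\varphi_1(x)+\varphi_2(y))$ locally (with $\psi(t)=\phi_3^{-1}(-t)$), then feed this into Lemma \ref{lem:difftest}. The one difference is that you spend more effort than the paper does on locating a base point that simultaneously avoids $Z_0$ and the bad locus $\{f_x=0\}\cup\{f_y=0\}$ (invoking the analytic implicit function theorem), whereas the paper handles this more tersely; note that, once one has $f=\psi(\varphi_1+\varphi_2)$ on the relevant set, the nonvanishing of $f_x,f_y$ there is actually automatic from the chain rule together with the fact that the $\phi_i$ have smooth inverses (so $\phi_i'\neq 0$), which makes the analyticity argument avoidable.
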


\begin{proof}

Since $F$ is degenerate, there is some open neighborhood $I_1\times I_2\times I_3$ intersecting $Z(F)$, and some smooth functions $\varphi_1$, $\varphi_2$ and $\varphi_3$ with smooth inverses, such that, for all $(x,y,z) \in I_1 \times I_2 \times I_3$, 
\begin{equation} \label{iff2}
F(x,y,z) = 0 \Leftrightarrow \varphi_1(x)+\varphi_2(y) + \varphi_3(z) = 0.
\end{equation}
Then, since $\varphi_3$ has a smooth inverse on $I_3$,  we can define a new smooth function $\psi(t)  = \varphi_3^{-1}(-t)$. The second equation in \eqref{iff2} is therefore equivalent  to  $z = \psi(\varphi_1(x) + \varphi_2(y))$. Making use of the hypothesis \eqref{iff1}, it then follows from \eqref{iff2} that,  for all $(x,y,z) \in I_1 \times I_2 \times I_3$, 
\begin{align*} \label{iff3}
z=f(x,y) \Leftrightarrow F(x,y,z) = 0 & \Leftrightarrow \varphi_1(x)+\varphi_2(y) + \varphi_3(z) = 0
\\& \Leftrightarrow z = \psi(\varphi_1(x) + \varphi_2(y)).
\end{align*}
Therefore, for all $(x,y,z) \in I_1 \times I_2 \times I_3$, we have
\[
f(x,y)=\psi(\varphi_1(x) + \varphi_2(y)).
\]
Define
\[
U:=\{(x,y) \in I_1 \times I_2 : f(x,y) \in I_3 \}\setminus (\{f_x=0\}\cup\{f_y=0\})
\]
and note that $U$ is open, since it is the intersection of three open sets. Also, $U$ is non-empty, since there is some $(x,y,z) \in I_1 \times I_2 \times I_3$ such that $F(x,y,z)=0$ and $f_x$ and $f_y$ are both not identically zero. It then follows from Lemma \ref{lem:difftest} that \eqref{eq:test} is identically zero on $U$.

\end{proof}

The development of the Elekes-Szab\'{o} Theorem has largely been motivated by applications to problems in discrete geometry. See the survey of de Zeeuw \cite{dZ} for more background on this problem and its applications. A recent paper of Roche-Newton \cite{ORN} applied the Elekes-Szab\'{o} Theorem in order to prove new results in sum-product theory. The present paper is strongly influenced by the ideas in \cite{ORN}, which in turn takes ideas from \cite{SS} and \cite{MRNSW}.

\section{Proof of Theorem \ref{thm:main1}}

The proofs of Theorems \ref{thm:main2} and \ref{thm:main1} are similar in structure and are based on the same ideas. However, Theorem \ref{thm:main2} involves an extra level of computational and algebraic difficulty. It is therefore logical to begin by proving Theorem \ref{thm:main1} in order to introduce the main ideas in the easier setting.

In order to prove Theorem \ref{thm:main1}, we will first prove the following result, which may be of independent interest. For any $A,B \subset \mathbb R$ and $G \subset A \times B$, We use the notation
\[
A+_G B:= \{ a+b: (a,b) \in G \}
\]
for the sum set $A+B$ restricted to $G$.

\begin{lemma} \label{lem:main}
For any $A \subset \mathbb R$ and $G \subset A \times A \setminus \{ (a,a) : a \in A \}$,
\begin{equation} \label{SPgraph}
\max \{ |A-_G A|, |A^2-_G A^2|, |A^3-_G A^3| \} \gg |G|^{7/12}.
\end{equation}
\end{lemma}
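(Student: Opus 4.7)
The plan is to map each pair $(a,b)\in G$ to the triple $(a-b,\,a^2-b^2,\,a^3-b^3)\in (A-_G A)\times (A^2-_G A^2)\times (A^3-_G A^3)$ and then apply the Elekes--Szab\'o Theorem (Theorem~\ref{thm:11/6}) to the algebraic surface swept out by these triples. Writing $d=a-b$, $e=a^2-b^2$, $f=a^3-b^3$, the factorizations $e=d(a+b)$ and $f=d(a^2+ab+b^2)$ combined with the identity $(a-b)^2=(a+b)^2-4ab$ produce the single polynomial constraint
\[
F(d,e,f) \;:=\; d^4 + 3e^2 - 4df \;=\; 0.
\]
Because $G$ avoids the diagonal, $d\neq 0$, so $a+b=e/d$ is determined by the triple; hence the map $(a,b)\mapsto (d,e,f)$ is injective on $G$, and $|G|$ lower-bounds the number of points of $(A-_G A)\times (A^2-_G A^2)\times (A^3-_G A^3)$ lying on $Z(F)$.

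Next I would verify that $F(x,y,z)=x^4+3y^2-4xz$ satisfies all hypotheses of Theorem~\ref{thm:11/6}. Viewed as a polynomial in $z$, $F$ is linear with leading coefficient $-4x$, which forces irreducibility over $\R[x,y,z]$, and the three partial derivatives $4x^3-4z$, $6y$, $-4x$ are all non-vanishing as polynomials. For non-degeneracy I would apply Lemma~\ref{lem:test}: solving $F=0$ for $z$ gives $f(x,y)=x^3/4+3y^2/(4x)$; a short computation then yields $f_x/f_y=(x^4-y^2)/(2xy)$ and
\[
\frac{\partial^2 \ln|f_x/f_y|}{\partial x\,\partial y} \;=\; \frac{8x^3 y}{(x^4-y^2)^2},
\]
which is a nonzero rational function and so cannot vanish identically on any open subset of its domain. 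The contrapositive of Lemma~\ref{lem:test} then shows $F$ is non-degenerate.

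With the hypotheses confirmed, Theorem~\ref{thm:11/6} delivers
\[
|G| \;\le\; \bigl|Z(F)\cap \bigl((A-_G A)\times (A^2-_G A^2)\times (A^3-_G A^3)\bigr)\bigr| \;\ll\; M^{12/7},
\]
where $M=\max\{|A-_G A|,|A^2-_G A^2|,|A^3-_G A^3|\}$, and rearranging gives $M\gg |G|^{7/12}$, which is precisely~\eqref{SPgraph}. The substantive step is locating the polynomial $F$: the three power-sum differences $a^j-b^j$ for $j=1,2,3$ depend on only two parameters, so some relation must exist, and the content of the argument lies in checking that the specific relation $d^4+3e^2-4df=0$ carves out a surface to which the Elekes--Szab\'o Theorem can be applied. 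The remaining steps (injectivity, irreducibility, the derivative test for non-degeneracy) are mechanical once the polynomial is in hand.
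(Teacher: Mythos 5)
Your proposal follows exactly the paper's approach: encode each $(a,b)\in G$ as the triple $(a-b,\,a^2-b^2,\,a^3-b^3)$, eliminate $a,b$ to obtain the cubic constraint $d^4+3e^2-4df=0$ (the paper's $F(x,y,z)=4xz-3y^2-x^4$ is just the negative), verify injectivity via $c\neq 0$, check non-degeneracy with the logarithmic-derivative test of Lemma~\ref{lem:test}, and invoke Theorem~\ref{thm:11/6}. Your computation $\partial^2(\ln|f_x/f_y|)/\partial x\,\partial y = 8x^3y/(x^4-y^2)^2$ is in fact correct, whereas the paper prints $8x^3y^2/(x^4-y^4)^2$ (apparently a typo); both expressions have zero set contained in the coordinate axes, so the non-degeneracy conclusion is unaffected either way.
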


\begin{proof}
The proof is similar to the proof of the main result of \cite{ORN}. Write
\[
C:=A-_G A,\,\,\,\,D:=A^2-_G  A^2, \,\,\,\, E:=A^3 -_G A^3.
\]
We will double count solutions to the system of equations
\begin{align}
c&=a-b, \label{eq1}
\\ d&=a^2- b^2, \label{eq2}
\\ e&=a^3-b^3, \label{eq3}
\end{align}
such that $(a,b) \in G$ and $(c,d,e) \in C \times D \times E$. Define $S$ to be the number of solutions to this system. A simple observation is that
\begin{equation} \label{easy}
S=|G|
\end{equation}
since each pair $(a,b) \in G$ gives rise to a unique solution.

We seek to find a complementary upper bound for $S$ via an application of Theorem \ref{thm:11/6}. For each contribution to $S$, we can eliminate $a$ and $b$ from the system above. Together, \eqref{eq1} and \eqref{eq2} imply that
\begin{equation*}\label{aandb}
b=\frac{d-c^2}{2c}, \,\,\,\,\,\,\,\,\, a=  \frac{d+c^2}{2c}.
\end{equation*}
Note that we do not need to worry about dividing by zero here, since $G$ does not contain any diagonal pairs with $a=b$, i.e., $c \neq 0$. Substituting this information into \eqref{eq3} and rearranging gives
\begin{equation*} \label{3var}
4ce=3d^2+c^4.
\end{equation*}
Next, define 
\[F(x,y,z):=4xz-3y^2-x^4.
\]
We have thus deduced that every contribution $(a,b,c,d,e)$ to $S$ gives rise to a solution $(c,d,e)$ to the equation $F(c,d,e)=0$. Furthermore, no triple $(c,d,e)$ contributes more than once to~$S$, since for fixed $c$ and $d$ with $c \neq 0$ there exists at most one pair $(a,b)$ which satisfies both \eqref{eq1} and \eqref{eq2}. Therefore, we have
\begin{equation} \label{SandF}
S \leq |Z(F) \cap C \times D \times E|.
\end{equation}

\begin{claim}
$F$ is non-degenerate.
\end{claim}

Assuming that the claim is correct, we can apply Theorem \ref{thm:11/6} to obtain the upper bound
\[
|Z(F) \cap C \times D \times E| \ll  \max \{ |C|,|D|,|E| \}^{12/7}.
\]
Combining this with \eqref{SandF} and \eqref{easy}, we have
\[
|G| \ll \max \{ |C|,|D|,|E| \}^{12/7}
\]
and it follows that
\[
\max \{|C|,|D|,|E|\} \gg |G|^{7/12},
\]
as required. It remains to prove the claim.

\begin{proof}[Proof of Claim]

The expression $F(x,y,z) = 0$ can be rearranged into the form
\[
z =\frac{1}{4} \cdot \frac{3y^2+ x^4}{x}.
\]
We define
\[
f(x,y):=\frac{1}{4}  \cdot \frac{3y^2+ x^4}{x}.
\] 
Suppose for a contradiction that $F$ is degenerate. Then it follows from Lemma \ref{lem:test} that there is an open set $U\subset I_1\times I_2$ on which \eqref{eq:test}
is identically zero. We can calculate \eqref{eq:test} directly and obtain a contradiction. Indeed,
\begin{equation*} \label{calculationagain}
\frac{\partial^2\left(\ln|f_x/f_y|\right)}{\partial x\partial y}=\frac{8x^3y^2}{(x^4-y^4)^2}.
\end{equation*}
This is zero if and only if $(x,y)$ is a point on the coordinate axes other than the origin, that is, if $(x,y)$ belongs to the set
\[
\{(x,0): x \in \mathbb R \setminus \{0\} \} \cup \{ (0,y) : y \in \mathbb R \setminus \{0\} \}.
\]
However, this set does not contain any open subsets in $\R^2$. This contradicts the claimed existence of the set $U$, and thus proves that $F$ is non-degenerate.

\end{proof}

Since the proof of the claim is complete, so is the proof of Lemma \ref{lem:main}.
\end{proof}

Observe that the condition in Lemma \ref{lem:main} that diagonal elements are excluded from $G$ is necessary. Indeed, in the extreme case whereby $G=\{ (a,a) : a \in A \}$, we would have
\[
 A-_G A= A^2-_G A^2 =A^3-_G A^3 = \{ 0 \}
\]
and so the conclusion \eqref{SPgraph} fails in the strongest possible sense.

We will repeatedly apply a basic squeezing argument that was the key building block for the results in \cite{HRNR}. For convenience, we consolidate a form of it here in the following lemma.

\begin{lemma} \label{lem:squeeze}
Let $A \subset \mathbb R$ and write $A=\{a_1<a_2< \dots < a_n \}$. Let $D$ denote the set
\[
D := \{ a_{i+1} - a_i : 1 \leq i \leq n-1 \} 
\]
of consecutive differences determined by $A$.  Suppose that there exists a set $D' \subset D$ and a constant $L$ such that, for all $d \in D'$,
\[
|\{i \in [n-1]\colon  a_{i+1}-a_i = d \}| \geq L.
\]
Then
\[
|A+A-A| \gg |D'|^2L.
\]
\end{lemma}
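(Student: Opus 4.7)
The plan is to adapt the Ruzsa-Shakan-Solymosi-Szemer\'{e}di squeezing argument behind the bound \eqref{RSSS}. For each $d \in D'$, I would set $I_d := \{i \in [n-1] : a_{i+1} - a_i = d\}$, so that $|I_d| \geq L$ by hypothesis, and enumerate $D' = \{d_1 < d_2 < \cdots < d_m\}$ with $m := |D'|$.

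The core construction: for every triple $(k,\ell,i)$ with $1 \leq \ell < k \leq m$ and $i \in I_{d_k}$, form the element $a_i + d_\ell$. Picking any $j \in I_{d_\ell}$, we see that $a_i + d_\ell = a_i + a_{j+1} - a_j \in A+A-A$. Moreover the inequality $0 < d_\ell < d_k = a_{i+1}-a_i$ squeezes this element strictly into the open interval $(a_i, a_{i+1})$. Since the intervals $(a_i, a_{i+1})$ for $i \in [n-1]$ are pairwise disjoint, the value $a_i + d_\ell$ first determines $i$ (and hence $d_k$), and then $d_\ell$ as the residual $(a_i+d_\ell) - a_i$. Thus distinct admissible triples produce distinct elements of $A+A-A$, and the total count is at least
\[
L \sum_{k=1}^{m}(k-1) \;=\; L\binom{m}{2} \;\gg\; L|D'|^2.
\]

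There is no genuinely hard step here; the point to watch is the \emph{strict} inequality $\ell < k$, which is precisely what keeps $a_i + d_\ell$ strictly below $a_{i+1}$ and so allows the disjoint-intervals trick to deliver distinctness. The only degenerate situation is $m \leq 1$: when $m = 0$ the bound is trivial, and when $m = 1$ the count above vanishes but the hypothesis forces $n \geq L+1$, so $|A+A-A| \geq |A| \geq L \gg L|D'|^2$ handles it directly, all absorbed into the implicit constant in $\gg$.
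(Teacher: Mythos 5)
Your proof is correct and takes essentially the same approach as the paper: order $D'$, squeeze the smaller consecutive differences $d_\ell$ into gaps of length $d_k > d_\ell$, and count distinct elements of $A+A-A$ via the disjointness of the intervals $(a_i, a_{i+1})$. The only cosmetic difference is that the paper allows $\ell = k$ (using the half-open interval $(a_i, a_{i+1}]$, so $a_i+d_k=a_{i+1}$ is also counted), which gives the sum $\sum_{j=1}^{t} j$ instead of $\binom{m}{2}$ and thereby avoids the $|D'|=1$ edge case you had to treat separately.
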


\begin{proof}
Label the elements of $D'$ in ascending order, so $D'=\{d_1 < d_2 < \dots d_t \}$, where $t=|D'|$. Fix $1 \leq j \leq t$. Since $d_j$ has at least $L$ representations as a consecutive difference, we may consider $L$ distinct and disjoint intervals $(a_i,a_{i+1}]$, each with length $d_j$. Then, for all $ 1 \leq k \leq j$, we have
\[
a_i<a_i+d_k \leq a_{i+1}.
\]
Since $a_i+d_k \in A+A-A$, it follows that there are at least $j$ elements of $A+A-A$ in the half-open interval $(a_i, a_{i+1}]$. Repeating this for each of the $L$ choices for $i$, we see that this $j$ gives rise to at least $jL$ elements of $A+A-A$. Summing over all $j$, it follows that
\[
|A+A-A| \geq \sum_{j=1}^t jL \gg t^2L=|D'|^2L.
\]

\end{proof}

We are now ready to prove Theorem \ref{thm:main1}.

\begin{proof}[Proof of Theorem \ref{thm:main1}]
Let $N=|A|$ and label the elements of $A$ in increasing order:
\[
A=\{ a_1<a_2< \cdots <a_N \}.
\]
Consider the set
\[
D:=\{a_{i+1}-a_i : 1 \leq i \leq N-1 \}
\]
of neighboring differences of elements of $A$. For each $d \in D$ we define 
\[
r(d)=|\{1 \leq i \leq N-1: a_{i+1} -a_i = d \}|.
\]
Note that
\begin{equation} \label{easysum}
\sum_{d \in D} r(d) = N-1.
\end{equation}
We can use a dyadic pigeonholing argument to show that there is some subset $D_1 \subset D$ and some integer $L_1$ such that
\[
|D_1|L_1 \gtrsim N \quad \text{and} \quad \forall d \in D_1, \,\, L_1 < r(d) \leq 2L_1.
\]
Since the dyadic pigeonholing technique is repeatedly used throughout this paper, we will give the full details of its first application below, and will be more succinct later. Indeed, it follows from \eqref{easysum} that
\[
N-1=\sum_{d \in D} r(d) = \sum_{j=0}^{\lceil\log N \rceil} \sum_{d \in D : 2^{j-1} < r(d) \leq 2^j} r(d)
\]
and so there exists some $0 \leq j_0 \leq \lceil \log n \rceil$ such that
\[
  \sum_{d \in D : 2^{j_0-1} < r(d) \leq 2^{j_0}} r(d) \gtrsim N.
\]
Now set
\[
L_1:=2^{j_0-1} \quad \text{and} \quad D_1:=\{d \in D : L_1 < r(d) \leq 2L_1 \}.
\]
Define $H \subset A \times A$ as follows:
\[
H:=\{(a_i,a_{i+1}): 1 \leq i \leq N-1, a_{i+1}-a_i \in D_1\}.
\]
Note that $|H|=\sum\limits_{d\in D_1}r(d) \geq |D_1|L_1 \gtrsim N$. Lemma \ref{lem:squeeze} implies that
\begin{equation} \label{case1}
|A+A-A| \gg |D_1|^2L_1 \gtrsim |D_1|N= |A-_H A| N.
\end{equation}

Now consider the set
\[
A^2-_H A^2= \{a_{i+1}^2 -a_i^2 : (a_i, a_{i+1}) \in H \}.
\]
For $d \in A^2-_H A^2$, define 
\[ 
s(d)=|\{(a_i, a_{i+1}) \in H : a_{i+1}^2 -a_i^2 = d \}.
\]
Note that $\sum_{d \in A^2-_H A^2} s(d)= |H| \gtrsim N$. After dyadic pigeonholing again, we obtain a subset $D_2 \subset A^2-_H A^2$ and some integer $L_2$ such that
\[
|D_2|L_2 \gtrsim N\quad \text{and} \quad \forall d \in D_2, \, \, L_2 < s(d) \leq 2L_2.
\]
Define $H' \subset H$ as follows:
\[
H':=\{(a_i,a_{i+1}) \in H : a_{i+1}^2 -a_i^2 \in D_2 \}.
\]
Note that $|H'|=\sum\limits_{d\in D_2}s(d) \geq |D_2|L_2 \gtrsim N$. Lemma \ref{lem:squeeze} implies that
\begin{equation} \label{case2}
|A^2+A^2-A^2| \gg |D_2|^2L_2 \gtrsim |D_2|N= |A^2-_{H'} A^2| N.
\end{equation}

We repeat this argument one more time. For each $d \in A^3-_{H'}A^3$, define
\[
t(d):= |\{(a_i, a_{i+1}) \in H' : a_{i+1}^3-a_i^3=d \}|.
\]
We have $\sum_{d \in A^3 -_{H'} A^3} t(d) = |H'| \gtrsim N$. Therefore, by a further dyadic pigeonholing step, there exists $D_3 \subset A^3-_{H'} A^3$ and some integer $L_3$ such that
\[
|D_3|L_3 \gtrsim N\quad \text{and} \quad \forall d \in D_3, \, \, L_3 < t(d) \leq 2L_3.
\]
Define $H'' \subset H' \subset H$ as follows:
\[
H'':=\{(a_i,a_{i+1}) \in H' : a_{i+1}^3 -a_i^3 \in D_3 \}.
\]
Note that $|H''|=\sum\limits_{d\in D_3}t(d) \geq |D_3|L_3 \gtrsim N$. Lemma \ref{lem:squeeze} yields
\begin{equation} \label{case3}
|A^3+A^3-A^3| \gg |D_3|^2L_3 \gtrsim |D_3|N= |A^3-_{H''} A^3| N.
\end{equation}
Now apply Lemma \ref{lem:main} to the set $H''$. We have
\begin{align*}
\max\{|A-_{H} A|, |A^2-_{H'} A^2|, |A^3-_{H''} A^3| \} & \geq \max\{|A-_{H''} A|, |A^2-_{H''} A^2|, |A^3-_{H''} A^3| \} 
\\ &\gg |H''|^{7/12} \gtrsim N^{7/12},
\end{align*}
where the first inequality uses the inclusions $H'' \subset H' \subset H$.

However, it then follows from \eqref{case1}, \eqref{case2} and \eqref{case3} that
\begin{align*}
&\max\{|A+A-A|, |A^2+A^2-A^2|, |A^3+A^3-A^3|\} 
\\ &\gtrsim N \cdot \max\{|A-_{H} A|, |A^2-_{H'} A^2|, |A^3-_{H''} A^3| \}
\\& \gtrsim N^{19/12},
\end{align*}
as required.
\end{proof}

\subsection{Possible Generalizations of Theorem \ref{thm:main1}}

Theorem \ref{thm:main1} and its proof suggests a possible generalization, which is that the bound
\begin{equation}\label{gen}
\max \{|A+A-A|, |f(A)+f(A)-f(A)|, |g(A)+g(A)-g(A)| \} \gtrsim |A|^{19/12}
\end{equation}
holds for suitable choices of the functions $f$ and $g$. Theorem \ref{thm:main1} proves \eqref{gen} for the case $f(x)=x^2$ and $g(x)=x^3$. We have also verified that our proof works for the case when $f(x)=x^2$ and $g(x)=x^n$ for all $n \geq 3$. Indeed, for fixed $n\geq 3$, we can replace \eqref{eq3} with the analogous equation $e=a^n-b^n$. The resulting polynomial equation $F(x,y,z)=0$ gives us the corresponding rational function \[\frac{\left(x^2+y\right)^n-\left(y-x^2\right)^n}{2x}.\] Computing the derivative \eqref{eq:test} gives us a more complicated, but nevertheless rational function in $x$ and $y$, namely:
\[
\frac{
\begin{aligned}
& (x^2 + y)^5(x^2-y)^{4n+1}-\left(x^2-y\right)^5(x^2+y)^{4n+1}- 8(n-1)x^2y(y^2-x^4)^{2n+1}(x^4-3y^2)\\
& - 2 (y-x^2)^{3n}(x^2+y)^{n+3}\left((2n-3)x^6+(4n^2-12n+9)x^4 y+(6n-5)x^2y^2-y^3\right)\\
& - 2 (y-x^2)^{n+3} (x^2 + y)^{3 n} \left((2n-3)x^6-(4 n^2-12n+9) x^4 y+(6n-5) x^2 y^2-y^3 \right)
\end{aligned}
}
{
\left(x^4-y^2\right)\left(2 y \left(x^4-y^2\right)^{2n+1}-\left(x^2-y\right)^3 \cdot \left(y+x^2\right)^{2n}+\left(x^2+y\right)^3 \left(y-x^2\right)^{2n}\right)^2
},
\]
from which we can assert that its zero set forms a set of dimension less than or equal 1. We can therefore deduce that such a zero set does not contain any open subsets in $\R^2$, as needed. We expect that this setup could potentially prove \eqref{gen} for the case when $f$ and $g$ are polynomials with distinct degrees both greater than or equal to 2, although we do not pursue this here due to the fact that the elimination method produces a polynomial of very high degree (in all variables) for $F$ and therefore significantly increases the computational complexity of the second derivative~\eqref{eq:test}. 

The squeezing technique was recently used by the first author \cite{ORN2} to obtain growth exponent strictly greater than $3/2$ with only two sets involved. More precisely, it was proven in \cite{ORN2} that
\[
\max \{ |8A-7A|,|5f(A)-4f(A)| \} \gg |A|^{\frac{3}{2} + \frac{1}{54}}
\]
provided that $f$ is a convex function that satisfies an additional technical condition. One can use Pl\"{u}nnecke's Theorem to reduce the number of variables, obtaining
\begin{equation} \label{twosets}
\max \{ |3A-A|,|3f(A)-f(A)| \} \gg |A|^{\frac{3}{2} + c}.
\end{equation}
The function $f(x)=x^3$ satisfies this condition, and so \eqref{twosets} may be regarded as an improvement to Theorem \ref{thm:main1}. Interestingly, the case when $f(x)=x^2$ is not covered by the result in \cite{ORN2}, and the task of proving that
\[
\max \{ |k_1A-k_2A|,|\ell_1A^2-\ell_2A^2| \} \gg |A|^{\frac{3}{2} + c},
\]
for some constants $k_1,k_2,\ell_1,\ell_2 \in \mathbb N$ and $c>0$, remains open.

\section{Proof of Theorem \ref{thm:main2}}

This proof is based on the same elementary ideas that were used in \cite{HRNS}. However, we give a slightly different presentation of the basic idea here, which we hope is a little easier to digest. The idea of using triples was communicated to us by Misha Rudnev, who found this alternative approach to the proof of the main expander result in \cite{HRNS}. We are very grateful to him for sharing his ideas with us.

A two-fold squeezing argument will be used repeatedly in the proof of Theorem \ref{thm:main22}. To limit repetition, we consolidate these applications in the form of the following lemma.

\begin{lemma} \label{lem:squeeze2}
Suppose that $Y= \{ y_1 < y_2 < \dots <y_n \}$ and $Z=\{z_1 < z_2 < \dots < z_n \}$ are sets of real numbers satisfying the property that
\[
y_i < z_i \,\,\,\, \text{for all}\,\,\,\, i\in [n]  \,\,\,\, \text{and}\,\,\,\, z_i < y_j \,\,\,\, \text{for all}\,\,\,\, i<j.
\]
In other words, the ordered set $Y \cup Z$ starts with an element of $Y$ and then alternates between $Y$ and $Z$. 
Write $x_i=z_i-y_i$. Suppose also that
\[
x_i < x_j \,\,\,\, \text{for all}\,\,\,\, i<j.
\]
Let $I \subset [\lfloor n/2 \rfloor-1]$ be an indexing set of integers. Then there exists a subset $I' \subset I$ such that $|I'| \gtrsim |I|$ and
\begin{equation} \label{lemgoal}
|2Y+2Z-2Y-Z| \gg n^2|\{x_{j+1} - x_j : j \in I' \}|.
\end{equation}
\end{lemma}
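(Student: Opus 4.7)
The plan is to produce $\gg n^2|D|$ distinct elements of $2Y+2Z-2Y-Z$, where $D := \{x_{j+1}-x_j : j \in I'\}$, via a two-level squeezing argument.

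First, I would apply a dyadic pigeonhole to the values $\delta_j := x_{j+1}-x_j$ for $j \in I$ to extract a subset $I' \subseteq I$ with $|I'| \gtrsim |I|$ on which all $\delta_j$ lie in a common dyadic range $[L, 2L)$ for some $L > 0$. (This absorbs a logarithmic factor, consistent with $\gtrsim$.) After this step, the $\delta_j$'s for $j \in I'$ are all commensurate.

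Next, for each triple $(p, q, j) \in [n]\times[n]\times I'$ with $j+1 \leq q < p$, I would consider the explicit element
\[
E(p, q, j) := y_p + x_q - \delta_j = y_p + y_{j+1} + z_q + z_j - y_q - y_j - z_{j+1},
\]
where the second equality uses $x_q = z_q - y_q$ and $\delta_j = (z_{j+1}-y_{j+1}) - (z_j - y_j)$. Counting the tally of $y$'s and $z$'s shows $E(p,q,j) \in 2Y+2Z-2Y-Z$. The constraint $q \geq j+1$ gives $x_q \geq x_{j+1} > \delta_j$, so $x_q - \delta_j > 0$, while $q < p$ gives $x_q - \delta_j < x_p$. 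Hence $E(p,q,j) \in (y_p, z_p)$.

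The outer squeeze then comes for free: the intervals $(y_p, z_p)$ are pairwise disjoint by the interleaving hypothesis on $Y$ and $Z$, so elements with different $p$ are automatically distinct, producing one factor of $n$. The inner squeeze must then establish that, for each fixed $p$, the map $(q, j) \mapsto x_q - \delta_j$ attains $\gg n|D|$ distinct values within $(y_p, z_p)$. To control collisions, choose a representative $j(\delta) \in I'$ for each distinct $\delta \in D$; a coincidence $x_q - \delta = x_{q'} - \delta'$ with $\delta \neq \delta'$ forces $x_q - x_{q'} = \delta - \delta' \in (-L, L)$, and the dyadic control from Step 1 together with the strict monotonicity of $(x_i)$ is what confines such near-coincidences and allows the collisions to be absorbed into a constant-factor loss. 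Summing the $\gg n|D|$ elements over the $\gg n$ valid choices of $p$ yields the target bound $n^2 |D|$.

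The \textbf{main obstacle} is securing the inner squeeze. The simplest form of the construction, $E(p,q,j) = y_p + x_q - \delta_j$, produces only $|(X - D) \cap (0, x_p)|$ distinct elements inside $(y_p, z_p)$, and this quantity can fall short of $n|D|$ when $X$ carries strong additive structure (so that $|X-D|$ is close to $|X|+|D|$ rather than $n|D|$). Closing this gap likely requires exploiting the full seven-term flexibility of $2Y+2Z-2Y-Z$ — for instance by also folding an auxiliary $(y_{k+1}-y_k)$ term into the construction, thus turning the three-parameter family $(p,q,j)$ into a richer family whose distinctness is easier to establish. The dyadic pigeonhole of Step 1 is precisely what makes such a refinement quantitatively workable, since it replaces the arbitrary scales of the $\delta_j$ with a single comparable one.
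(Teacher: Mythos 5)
The proposal has the right outer squeeze (disjoint intervals $(y_p,z_p)$), and correctly identifies that the inner count is the crux — but it does not close the gap, and the attempted setup would not work if pushed through.

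The paper's construction is
\[
y_k + x_j + \gamma_\ell, \qquad \gamma_\ell < \gamma_j,\ j,\ell \in I',\ n/2 < k \leq n-1,
\]
where $\gamma_j := x_{j+1}-x_j$. The point of the constraint $\gamma_\ell < \gamma_j$ is that it forces $x_j + \gamma_\ell \in (x_j, x_{j+1})$, so for fixed $k$ the inner quantity lives inside a family of intervals $(x_j,x_{j+1})$ that are pairwise disjoint as $j$ varies. This is a genuine \emph{second} squeeze, nested inside the first one; it reduces the count of distinct elements of $2Y+2Z-2Y-Z$ in $(y_k,z_k)$ to the sum of ranks $\sum_{j\in I'}|\{\gamma\in\Gamma' : \gamma<\gamma_j\}|$, which is then bounded below. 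Your construction $y_p + x_q - \delta_j$ squeezes only into $(0,x_p)$; as you yourself note, the set $\{x_q-\delta_j\}$ is a difference set which can be very small when the $x_q$ have additive structure, so the inner count fails. Replacing $y_p + x_q - \delta_j$ by something of the paper's shape is exactly the extra idea you suspect is missing, but your sketch ("fold in an auxiliary $(y_{k+1}-y_k)$ term") does not specify the constraint that produces nested disjointness, and it is precisely that constraint that makes the argument work.

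A second, related issue: your dyadic pigeonhole is applied to the \emph{values} $\delta_j$, so that after passing to $I'$ the gaps $\delta_j$ all lie in $[L,2L)$. That normalization is not what the argument needs and in fact makes the inner count harder, since it concentrates the $\delta_j$ in a narrow window. The paper pigeonholes on the \emph{multiplicities} $r(\gamma) = |\{j : \gamma_j=\gamma\}|$, so that each distinct gap value $\gamma\in\Gamma'$ is attained by between $L$ and $2L$ indices. That control is what makes the sum of ranks computable: each rank value is hit by $\Theta(L)$ indices, so the rank sum is $\gtrsim L|\Gamma'|^2 \gtrsim n^2/L \geq n|\Gamma'|$. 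Without this your inner count has no way to see $n|D|$ distinct values. To fix your proof you would need both changes: switch the construction to one with the nested-interval constraint, and switch the pigeonhole to multiplicities so the rank sum can be estimated.
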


\begin{proof}

Define
\[
\gamma_j  := x_{j+1} - x_j,
\]
and 
\[
\Gamma  :=\{ \gamma_j : j \leq \lfloor n/2 \rfloor -1 \}.
\]
It is possible that $\gamma_j=\gamma_{j'}$ and $j \neq j'$. We will need to consider these potential multiplicities, and therefore define
\[
r(\gamma)=| \{ j \in [ \lfloor n/2 \rfloor -1 ]: \gamma_j= \gamma \}|.
\]
Observe that
\[
\sum_{ \gamma \in \Gamma} r(\gamma) = \lfloor n /2 \rfloor -1  \gg n.
\]
Therefore, it follows from dyadic pigeonholing that there exists an integer $L$ and a subset $\Gamma' \subset \Gamma$ such that
\[
L \leq r(\gamma) < 2L,\,\,\,\, \forall \, \gamma \in  \Gamma'
\]
and
\begin{equation} \label{dyadic}
n \lesssim |\Gamma'|L \leq n.
\end{equation}
Now define $I' \subset [ \lfloor n/2 \rfloor -1]$ to be the set
\[
I':=\{ j : \gamma_j \in \Gamma' \}.
\]
To put this another way, we have
\[
\Gamma'= \{ \gamma_j : j \in I' \}.
\]
Observe that $|I'| \gtrsim n$. Note that this set $\Gamma'$ is the set which features in the inequality \eqref{lemgoal}, which we are trying to prove. In other words, our goal is to prove that
\begin{equation} \label{newgoal}
|2Y+2Z-2Y-Z| \gg n^2 |\Gamma'|.
\end{equation} 
We will consider elements in $2Y+2Z-2Y-Z$ of the form
\begin{equation} \label{form}
y_k + x_j + \gamma_{\ell}
\end{equation}
such that
\begin{equation} \label{ranges}
n/2 < k \leq n-1, \, \, j, \ell \in I', \,\, \gamma_{\ell} < \gamma_j.
\end{equation}
It follows, from the assumption that $\gamma_{\ell} < \gamma_j$, that $x_j+ \gamma_{\ell} < x_{j+1}$. Moreover, we have
\begin{equation} \label{squeeze1}
x_j <x_j+ \gamma_{\ell} < x_j + \gamma_j= x_j + (x_{j+1} - x_j) = x_{j+1}.
\end{equation}
Also, since $k >n/2 > j+1$, it follows that
\begin{equation} \label{squeeze2}
y_k  < y_k + x_j + \gamma_{\ell} < y_k+ x_{j+1}  <z_k.
\end{equation}
The last inequality is a re-writing of the bound $x_{j+1} < x_k$, which follows from the monotonicity of the $x_j$ and the fact that $j+1 < k$.

\begin{figure}[ht]

\tikzmath{
\maxlength = 15;
\yj = 1;
\firstgap = 1;
\xj = \yj + (\firstgap / 2);
\yjone = \yj + \firstgap + 1;
\secondgap = 3;
\xjone = \yjone + (\secondgap / 2);
\yk = 9;
\thirdgap = 4;
\xk = \yk + (\thirdgap / 2);
\gammaj = \xjone - \xj;
\gammaell = 1.5;
\bluedot = \xj + \gammaell;
\reddot = \yk + \firstgap + \gammaell;
}

\centering

\begin{tikzpicture}

\foreach \vertpos in {0,1}{
    \draw (0,\vertpos) -- (\maxlength,\vertpos);
    }
\draw[dashed] (\maxlength/2,0.5) -- ++(0,8mm);
\draw[dashed] (\maxlength/2,0.5) -- ++(0,-8mm) node[below]{$\tfrac{n}2$};

\foreach \pos/\descr in {\xj/{\color{blue} x_j},\xjone/x_{j+1},\xk/{x_k}}
{
	\draw (\pos,1) -- ++(0,1mm);
	\draw (\pos,1) -- ++(0,-1mm);
	\node[yshift=-5mm] at (\pos,1) {$\descr$};
}

\foreach \pos/\descr in {\yj/y_j,\yj+\firstgap/z_j,\yjone/y_{j+1},\yjone+\secondgap/z_{j+1},\yk/y_k,\yk+\thirdgap/z_k}
{
	\draw (\pos,0) -- ++(0,1mm);
    \draw (\pos,0) -- ++(0,-1mm);
    \node[yshift=-4mm] at (\pos,0) {$\descr$};
}

\draw [mybrace] (\xj,1) -- node[above, yshift=4mm]{$\gamma_j$} ++(\gammaj,0);
\draw [mybrace, blue] (\yj,0) -- node[above, yshift=2mm]{} ++(\firstgap,0);
\draw [mybrace] (\yjone,0) -- node[above, yshift=2mm]{} ++(\secondgap,0);
\draw [mybrace] (\yk,0) -- node[above, yshift=2mm]{} ++(\thirdgap,0);

\draw[fill,blue] (\bluedot,1) circle (2pt);
\draw [mybracedown, blue] (\xj,1) -- node[below, yshift=-2.5mm]{{\color{blue} $\gamma_{\ell}$}} ++(\gammaell,0);
\draw [->, blue] plot [smooth,tension=1] coordinates {(\bluedot-1,1.5)(\bluedot-0.5,1.3)(\bluedot-0.1,1.1)};
\node[above, left] at (\bluedot-1,1.5) {\color{blue} $x_j+\gamma_{\ell}$};

\draw[fill,red] (\reddot,0) circle (2pt);
\draw [mybracedown,blue] (\yk,0) -- node[below, yshift=-2.5mm]{{\color{blue}$x_j+\gamma_{\ell}$}} ++(\firstgap+\gammaell,0);
\draw [->, red] plot [smooth,tension=1] coordinates {(\reddot-1,0.5)(\reddot-0.5,0.3)(\reddot-0.1,0.1)};
\node[above, left] at (\reddot-1,0.5) {\color{red} $y_k+x_j+\gamma_{\ell}$};

\end{tikzpicture}

\caption{This figure provides a pictorial view of the ``squeezing'' inequalities in \eqref{squeeze1} and \eqref{squeeze2}. The blue and red dots are the objects to be counted.}
\label{fig:squeeze}
\end{figure}
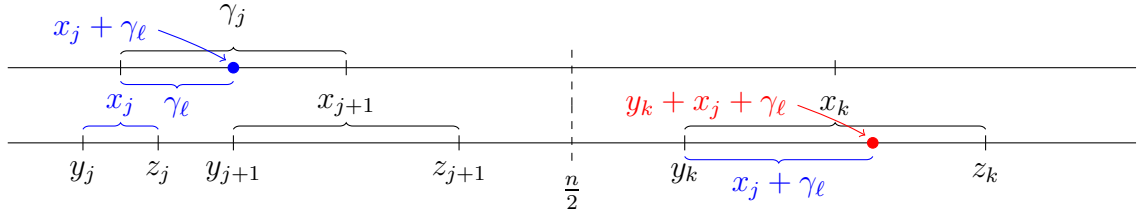

These inequalities allow us to squeeze sums into gaps efficiently. Indeed, let us consider a fixed $k$ in the range given by \eqref{ranges}. Then the sums of the form \eqref{form} belong to the interval $(y_k,z_k)$. As $k$ varies, these intervals are disjoint. Therefore, since the sums of the form \eqref{form} are all elements of the set $2Y+2Z-2Y-Z$, we have
\begin{align*}
2Y+2Z-2Y-Z  &\supseteq \{ y_k + x_j + \gamma_{\ell} : \frac{n}{2} < k \leq n-1, j, \ell \in I', \gamma_{\ell} < \gamma_j\} 
\\& = \bigcup_{\frac{n}{2} < k \leq n-1} \{ y_k + x_j + \gamma_{\ell} :  j, \ell \in I', \gamma_{\ell} < \gamma_j\}.
\end{align*}
This is a disjoint union, and so it follows that
\begin{align} \label{7sums}
&|2Y+2Z-2Y-Z|  \nonumber
\\&\geq \sum_{n/2 < k \leq n-1} |\{x_j + \gamma_{\ell} : j , \ell \in I', \gamma_{\ell} < \gamma_j \}| \gg n  |\{x_j + \gamma_{\ell} : j , \ell \in I', \gamma_{\ell} < \gamma_j \}|.
\end{align}

Moreover, we can use a second squeeze, coming from \eqref{squeeze1}, to obtain a lower bound for the size of the set $\{x_j + \gamma_{\ell} : j, \ell \in I', \gamma_{\ell} < \gamma_j \}$. For fixed $j \in I'$, it follows from \eqref{squeeze1} that 
\[
x_j + \gamma_{\ell} \in (x_j, x_{j+1})
\]
for all $\ell$ such that $\gamma_{\ell}  < \gamma_j$. As $j$ varies, these intervals are disjoint. Therefore,
\begin{equation} \label{sum}
|\{x_j + \gamma_{\ell} : j, \ell \in I', \gamma_{\ell} < \gamma_j \}|= \sum_{j \in I'} |\{ \ell \in I': \gamma_{\ell} < \gamma_j \}|.
\end{equation}
This sum can be seen as the sum over all $j \in I'$ of the rank of $j$, where the rank of $j$ tells us the position of $\gamma_j$ in the ordered set $\Gamma'$. By the dyadic pigeonholing argument used to construct the set $\Gamma'$, at most $2L$ elements $j$ can have the same rank. Therefore, \eqref{sum} is a sum of $|I'|$ non-negative integers, each occuring with multiplicity in between $L$ and $2L$. Since $|I'| \gtrsim n$, it follows that for some absolute constant $c$,
\[
|\{x_j + \gamma_{\ell} : j, \ell \in I_1, \gamma_{\ell} < \gamma_j \}| \geq L \sum_{j=0}^{c n/(L\log n)} j \gtrsim \frac{n^2}{L} \geq n|\Gamma'|.
\]
We have used the upper bound in \eqref{dyadic} for the last inequality. It therefore follows from \eqref{7sums} that
\[
|2Y+2Z-2Y-Z| \gg n^2|\Gamma'|.
\]
This proves \eqref{newgoal}, and therefore completes the proof of the lemma.
\end{proof}

Next, we restate Theorem \ref{thm:main2} in a way that makes for more convenient generalization. 

\begin{theorem} \label{thm:main22}
Let $A \subset \mathbb R$ and define the function $f: \mathbb R \rightarrow \mathbb R$ by $f(x)= \ln( e^x+1)$. Then there exist $a,a' \in A$ such that
\[
|2f(a+A)+2f(a'+A) - 2 f(a+A) - f(a'+A)| \gtrsim |A|^{31/12}.
\]
\end{theorem}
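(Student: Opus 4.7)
The plan is to mirror the structure of the proof of Theorem~\ref{thm:main1}, with Lemma~\ref{lem:squeeze2} playing the role of Lemma~\ref{lem:squeeze} and with a three-variable algebraic surface derived from $f(x) = \ln(e^x+1)$ playing the role of the polynomial $F(x,y,z) = 4xz - 3y^2 - x^4$ from Lemma~\ref{lem:main}. Because Lemma~\ref{lem:squeeze2} contributes an extra factor of $N$ compared with Lemma~\ref{lem:squeeze}, the same Elekes-Szab\'{o} input $N^{7/12}$ should lift the final exponent from $19/12 = 1 + 7/12$ to $31/12 = 2 + 7/12$.

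The first step is to choose $a, a' \in A$ so that the hypotheses of Lemma~\ref{lem:squeeze2} are satisfied. Writing $A = \{a_1 < \cdots < a_N\}$, a dyadic pigeonhole on the consecutive gaps of $A$ produces a long sub-progression $A'' \subset A$ of cardinality $\gtrsim N$ whose gaps all lie in a single dyadic range $(L, 2L]$; a further pigeonhole supplies a pair $a, a' \in A$ with $a' - a < L$. With this choice, setting $Y := f(a + A'')$ and $Z := f(a' + A'')$, strict monotonicity of $f$ gives the interlacing $y_i < z_i < y_{i+1}$ required by Lemma~\ref{lem:squeeze2}. Writing $h(t) := f(a' + t) - f(a + t)$, strict convexity of $f$ gives $h'(t) = f'(a' + t) - f'(a + t) > 0$, so the gaps $x_i := z_i - y_i = h(a_i)$ are monotonic in $i$.

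Lemma~\ref{lem:squeeze2} then yields
\[
|2Y + 2Z - 2Y - Z| \gg N^2 \cdot |D'|,
\]
where $D'$ is a subset of the consecutive-difference set $\{h(a_{j+1}) - h(a_j) : j \in I'\}$ coming from the dyadic pigeonhole built into Lemma~\ref{lem:squeeze2}, with $|I'| \gtrsim N$. The remaining task is to show $|D'| \gtrsim N^{7/12}$. For this I would consider triples $(c, d, e) := (f(a' + q) - f(a' + p),\ f(a + q) - f(a + p),\ q - p)$ indexed by suitable pairs $(p, q)$ from $A$; after exponentiating and eliminating $e^p,e^q$, the triples $(u, v, w) = (e^c, e^d, e^e)$ land on the surface
\[
F(u, v, w) := (u - 1)(w - v) - r(v - 1)(w - u) = 0, \quad r := e^{a' - a}.
\]
Applying Theorem~\ref{thm:11/6} to $F$, together with the identity $h(q) - h(p) = c - d$ that ties the triple count back to elements of $D'$, should then deliver $|D'| \gtrsim N^{7/12}$.

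The main obstacle is verifying that $F$ is non-degenerate. Following Lemma~\ref{lem:test}, I would solve $F(u, v, w) = 0$ for $w$ as $\tilde{f}(u, v)$ and check that $\partial^2 \ln|\tilde{f}_u / \tilde{f}_v|/\partial u \partial v$ is not identically zero on any open set. Because $\tilde{f}$ carries the extra M\"obius-type parameter $r$, the resulting rational function is significantly bulkier than its analogue $8x^3 y^2 / (x^4 - y^4)^2$ in the proof of Lemma~\ref{lem:main}, and the symbolic simplification needed to confirm non-vanishing is infeasible by hand---this is precisely where computer algebra becomes indispensable. Once $F$ has been certified non-degenerate, the combination of Theorem~\ref{thm:11/6}, the bound $|D'| \gtrsim N^{7/12}$, and Lemma~\ref{lem:squeeze2} gives $|2Y + 2Z - 2Y - Z| \gtrsim N^2 \cdot N^{7/12} = N^{31/12}$, as required.
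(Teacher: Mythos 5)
Your high-level plan (squeeze via Lemma~\ref{lem:squeeze2}, then Elekes--Szab\'o) is the right spirit, but the central algebraic step fails: the surface you propose is \emph{degenerate}, so Theorem~\ref{thm:11/6} cannot be applied to it. Writing $\tilde u = u-1$, $\tilde v = v-1$, $\tilde w = w-1$, the equation $(u-1)(w-v) - r(v-1)(w-u)=0$ becomes $\tilde u(\tilde w - \tilde v) - r\tilde v(\tilde w - \tilde u)=0$, and dividing by $\tilde u\tilde v\tilde w$ gives
\[
-\frac{r}{u-1} + \frac{1}{v-1} + \frac{r-1}{w-1} = 0,
\]
which is exactly the forbidden additive form $\phi_1(u)+\phi_2(v)+\phi_3(w)=0$ from Definition~\ref{def:nondegen}, with $\phi_i$ smooth and locally invertible away from the pole at $1$. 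This is unsurprising: your constraint is a cross-ratio condition on $(u,v,1,w)$, and cross-ratios linearise under M\"obius transformations, which is precisely the mechanism that makes Elekes--Szab\'o surfaces degenerate. No amount of computer algebra will rescue this $F$.

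There is also a second, independent gap. Even if your $F$ were non-degenerate, Theorem~\ref{thm:11/6} would produce a lower bound on $\max\{|C|,|D|,|E|\}$ where $C,D,E$ are the sets of possible values of $c,d,e$. But what Lemma~\ref{lem:squeeze2} requires is a lower bound on $|D'|$, the set of \emph{consecutive differences} of $x_i = z_i - y_i$, i.e.\ $|\{c-d\}|$. There is no implication from $\max\{|C|,|D|,|E|\}$ being large to $|\{c-d\}|$ being large (the set $\{c-d\}$ can collapse dramatically even when $|C|$ and $|D|$ are large), so the Elekes--Szab\'o output does not connect to the squeeze input.

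The paper's proof is structured to avoid both problems. It does not pigeonhole for a single close pair $(a,a')$. Instead it picks \emph{four} near-neighbour shifts $a_i,\dots,a_{i+3}$ by minimising $a_{i+3}-a_i$, translates by every third element $a_{3j}$ so that the resulting intervals are disjoint, and defines three gap sequences $c_j,d_j,e_j$, one for each adjacent pair among the four shifts. Lemma~\ref{lem:squeeze2} is applied three times, once per pair, producing three sets $\Gamma,\Delta,\mathcal{E}$ of \emph{consecutive differences} $c_{j+1}-c_j$, $d_{j+1}-d_j$, $e_{j+1}-e_j$. The surface is then built to relate $(e^{\gamma},e^{\delta},e^{\varepsilon})$, which introduces products of four distinct parameters $s_i,\dots,s_{i+3}$ and raises the degree enough that the resulting cubic $G$ is genuinely non-degenerate (this is verified via Lemma~\ref{lem:test} and computer algebra). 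Since each of $\Gamma,\Delta,\mathcal{E}$ is the output of one squeeze, the Elekes--Szab\'o bound $\max\{|\Gamma|,|\Delta|,|\mathcal{E}|\}\gg n^{7/12}$ plugs directly into one of the three squeeze inequalities, yielding the conclusion for \emph{some} pair $a,a'$ among the four shifts. The moral is that taking consecutive differences of the gap sequences (a ``second-order'' quantity, not a first-order quantity like your $c-d$) is precisely what both breaks degeneracy and makes the Elekes--Szab\'o output the quantity the squeeze lemma needs.

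As a minor additional point: your claim that ``a further pigeonhole supplies a pair $a,a'\in A$ with $a'-a < L$'' is not justified. If $A$ is, say, an arithmetic progression then every nonzero difference is at least the common gap, which is comparable to $L$, and no such pair exists. The paper sidesteps this entirely by choosing the four shifts to minimise $a_{i+3}-a_i$ and then translating by $a_{3j}$, so that disjointness (and hence the required interlacing) follows from minimality rather than from a small difference in $A$.
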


Let us quickly verify that this result does imply Theorem  \ref{thm:main2}. Note that there exists a subset $X'  \subset X$ with $|X'| \geq |X|/2-1$ such that all of the elements of $X'$ have the same sign. If all of the elements of $X'$ are positive then apply Theorem \ref{thm:main22}, taking $A=\ln X'$. Otherwise, all of the elements of $X'$ are negative and we can apply Theorem \ref{thm:main22} with $A=\ln(-X')$. It remains to prove Theorem \ref{thm:main22}. 

\begin{proof}[Proof of Theorem \ref{thm:main22}]

Label the elements of $A$ in ascending order, so $A=\{a_1<\dots < a_n \}$. Identify two ``nextdoor but two" elements of $A$ which are closest. That is, let $a_i$ and $a_{i+3}$ be two elements of $A$ such that
\begin{equation} \label{mindef}
a_{i+3} - a_i = \min \{a_{j+3} - a_j : 1 \leq j \leq n-3 \}.
\end{equation}
Then, for every third element $a_{3j}$ of $A$, where $1 \leq j \leq \lfloor n/3 \rfloor$, we consider the interval
\[
(a_i+a_{3j}, a_{i+3} + a_{3j}).
\]
Note that all of these intervals have the same length, which is $a_{i+3} - a_i$. Moreover, as $j$ varies, these intervals are disjoint. Indeed, suppose that two neighboring such intervals
\[
(a_i+a_{3j}, a_{i+3} + a_{3j}) \,\,\,\text{and} \,\,\,\, (a_i+a_{3(j+1)}, a_{i+3} + a_{3(j+1)})
\]
overlap. Then it follows that
\[
a_i+a_{3(j+1)} <  a_{i+3} + a_{3j}.
\]
This rearranges to give
\[
a_{3j+3} - a_{3j} < a_{i+3} - a_i,
\]
which contradicts the minimality of $a_{i+3}-a_i$ established in \eqref{mindef}.

We now apply the function $f$ to these intervals, so that we consider the intervals
\begin{equation} \label{intervals}
(f(a_i+a_{3j}), f(a_{i+3} + a_{3j})) \,\,\,\,\text{such that} \,\,\, 1 \leq j \leq \lfloor n/3 \rfloor.
\end{equation}
Note that $f$ is strictly convex. It therefore follows that the intervals considered in \eqref{intervals} have lengths which are strictly increasing as $j$ increases.

We will make use of three applications of Lemma \ref{lem:squeeze2}. To this end, we define
\begin{align*}
c_j &:= f(a_{i+1} + a_{3j}) -  f(a_{i} + a_{3j}), \\ 
d_j &:= f(a_{i+2} + a_{3j}) -  f(a_{i+1}+a_{3j}), \\ 
e_j &:= f(a_{i+3} + a_{3j}) -  f(a_{i+2} + a_{3j}). 
\end{align*}

See Figure \ref{fig:squeeze2} for a visual explanation of the sequences for these intervals $\{c_j\}_{j\geq 1}$, $\{d_j\}_{j\geq 1}$, and $\{e_j\}_{j\geq 1}.$ In Figure \ref{fig:squeeze2}, for the purposes of making the annotations readable, we use $a_{k,j}$ as a temporary shorthand to represent $a_{i+k}+a_{3j}$.

\begin{figure}
\centering

\tikzmath{
\maxlength = 20;
\topline = 3;
\bottomline = 0;
\midline = (\topline + \bottomline)/2;
\topfromtick = \topline - 0.1;
\toptotick = \topline + 0.1;
\bottomfromtick = \bottomline - 0.1;
\bottomtotick = \bottomline + 0.1;
\firstgap1 = 1.4;
\firstgap2 = 2.4;
\firstgap3 = 1.8;
\aiaj = 0.2;
\aioneaj = \aiaj + \firstgap1 ;
\aitwoaj = \aioneaj + \firstgap2 ;
\aithreeaj = \aitwoaj + \firstgap3 ;
\aiajone = \aiaj + (\maxlength / 2.2);
\aioneajone = \aiajone + \firstgap1;
\aitwoajone = \aioneajone + \firstgap2;
\aithreeajone = \aitwoajone + \firstgap3;
\expandj = 0.3;
\secondgap1 = \firstgap1 + \expandj;
\secondgap2 = \firstgap2 + \expandj;
\secondgap3 = \firstgap3 + \expandj;
\faiaj = 0.4;
\faioneaj = \faiaj + \secondgap1;
\faitwoaj = \faioneaj + \secondgap2;
\faithreeaj = \faitwoaj + \secondgap3;
\cj = \faiaj + (\secondgap1 / 2);
\dj = \faioneaj + (\secondgap2 / 2);
\ej = \faitwoaj + (\secondgap3 / 2);
\expandj1 = 1.3;
\thirdgap1 = \firstgap1 + \expandj1;
\thirdgap2 = \firstgap2 + \expandj1;
\thirdgap3 = \firstgap3 + \expandj1;
\faiajone = \faiaj + (\maxlength / 2);
\faioneajone = \faiajone + \thirdgap1;
\faitwoajone = \faioneajone + \thirdgap2;
\faithreeajone = \faitwoajone + \thirdgap3;
\cj1 = \faiajone + (\thirdgap1 / 2);
\dj1 = \faioneajone + (\thirdgap2 / 2);
\ej1 = \faitwoajone + (\thirdgap3 / 2);
\fleftfrom = \aioneaj + (\firstgap2)/2 ;
\fleftto = \faioneaj+(\secondgap2)/2;
\fleftmid = (\fleftfrom + \fleftto)/2;
\frightfrom = \aioneajone + (\firstgap2)/2 ;
\frightto = \faioneajone + (\thirdgap2)/2;
\frightmid = (\frightfrom + \frightto)/2;
}

\begin{tikzpicture}[scale=0.75]

\foreach \vertpos in {\bottomline,\topline}{
    \draw (0,\vertpos) -- (\maxlength,\vertpos);
    }
    
\foreach \pos/\descr in {
	\aiaj/a_{0,j},
	\aioneaj/a_{1,j},
	\aitwoaj/a_{2,j},
	\aithreeaj/a_{3,j},
	\aiajone/a_{0,j+1},
	\aioneajone/a_{1,j+1},
	\aitwoajone/a_{2,j+1},
	\aithreeajone/a_{3,j+1}
}
{
	\draw (\pos,\topfromtick) -- (\pos,\toptotick);
    \node[yshift=-4mm] at (\pos,\topline) {$\descr$};
}

\foreach \pos/\descr in {
	\faiaj/f(a_{0,j}),
	\faioneaj/f(a_{1,j}),
	\faitwoaj/f(a_{2,j}),
	\faithreeaj/f(a_{3,j}),
	\faiajone/f(a_{0,j+1}),
	\faioneajone/f(a_{1,j+1}),
	\faitwoajone/f(a_{2,j+1}),
	\faithreeajone/f(a_{3,j+1})
}
{
	\draw (\pos,\bottomfromtick) -- (\pos,\bottomtotick);
	\node[yshift=-4mm] at (\pos,\bottomline) {$\descr$};
}

\foreach \line/\pos/\gap in {
	\topline/\aiaj/\firstgap1,
	\topline/\aiajone/\firstgap1,
	\bottomline/\faiaj/\secondgap1,
	\bottomline/\faiajone/\thirdgap1
}
	\draw[red, ultra thick] (\pos,\line) -- (\pos+\gap,\line);

\foreach \line/\pos/\gap in {
	\topline/\aioneaj/\firstgap2,
	\topline/\aioneajone/\firstgap2,
	\bottomline/\faioneaj/\secondgap2,
	\bottomline/\faioneajone/\thirdgap2
}
	\draw[blue, ultra thick] (\pos,\line) -- (\pos+\gap,\line);

\foreach \line/\pos/\gap in {
	\topline/\aitwoaj/\firstgap3,
	\topline/\aitwoajone/\firstgap3,
	\bottomline/\faitwoaj/\secondgap3,
	\bottomline/\faitwoajone/\thirdgap3
}
	\draw[yellow, ultra thick] (\pos,\line) -- (\pos+\gap,\line);
	
\draw[->] (\fleftfrom,\topline-0.5) -- (\fleftto,\bottomline+1.2);
\node at (\fleftmid+0.6, \midline+0.3) {$f$};
\draw[->] (\frightfrom,\topline-0.5) -- (\frightto,\bottomline+1.2);
\node[right] at (\frightmid+1.1, \midline+0.3) {$f$};

\draw [mybrace]
(\faiaj + 0.05, \bottomline) -- node[above, yshift=3mm]{$c_j$}
++(\secondgap1 - 0.1, \bottomline);
\draw [mybrace]
(\faioneaj + 0.05,\bottomline) -- node[above, yshift=3mm]{$d_j$}
++(\secondgap2 - 0.1, \bottomline);
\draw [mybrace]
(\faitwoaj + 0.05,\bottomline) -- node[above, yshift=3mm]{$e_j$}
++(\secondgap3 - 0.1, \bottomline);
\draw [mybrace]
(\faiajone + 0.05,\bottomline) -- node[above, yshift=3mm]{$c_{j+1}$}  
++(\thirdgap1 - 0.1, \bottomline);
\draw [mybrace]
(\faioneajone + 0.05,\bottomline) -- node[above, yshift=3mm]{$d_{j+1}$}  
++(\thirdgap2 - 0.1, \bottomline);
\draw [mybrace]
(\faitwoajone + 0.05,\bottomline) -- node[above, yshift=3mm]{$e_{j+1}$}  
++(\thirdgap3 - 0.1, \bottomline);

\end{tikzpicture}

\caption{This figure illustrates the setup for the proof of Theorem \ref{thm:main22}. In the proof, we begin by finding many identical intervals, divided into three parts (color-coded in the figure for convenience), with endpoints in $A+A$ which do not overlap. This is illustrated in the top line. Application of our strictly convex function $f$ to these intervals changes their lengths. This is illustrated in the second line. After the application, the lengths of the colored intervals form an increasing sequence as $j$ increases. This puts us in a situation where we can look to apply Lemma \ref{lem:squeeze2} for each of these sets of increasing intervals.}
\label{fig:squeeze2}
\end{figure}
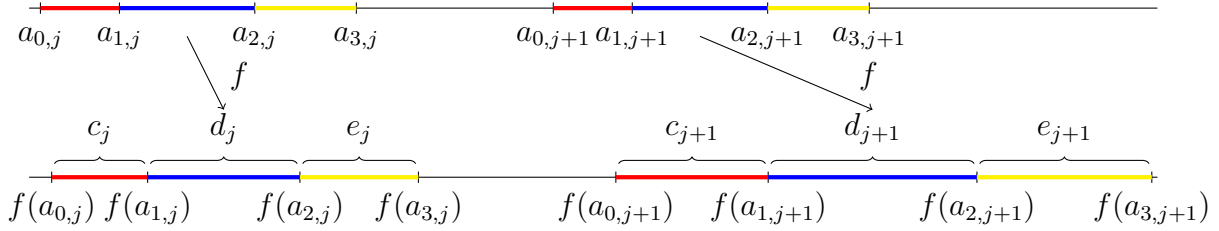

In words, $c_j$ is the difference between the first and second points of the $j$'th interval identified in \eqref{intervals},  $d_j$ is the difference between the second and third points of the $j$'th interval, and  $e_j$ is the difference between the third and fourth points of the $j$'th interval. Since $f$ is strictly convex, each of these quantities are strictly increasing with $j$. That is, we have
\[
c_j < c_{j+1}, \,\, d_j < d_{j+1}, \, \, e_j < e_{j+1}
\]
for all $1 \leq j \leq \lfloor n/3 \rfloor -1$.

Apply Lemma \ref{lem:squeeze2} with

\[
Y=\{f(a_i+a_{3j} ) : 1 \leq  j  \leq \lfloor n/3 \rfloor \}, \,\,\,\, Z=\{f(a_{i+1}+a_{3j} ) : 1 \leq j \leq  \lfloor n/3 \rfloor \},
\] 
\[
x_j=c_j, \,\,\,\, \text{and} \,\,\, I=[\lfloor n/6\rfloor -1].
\]
It follows that there exists $I_1 \subset [\lfloor n/6\rfloor  -1]$ such that
\[
|I_1| \gtrsim n
\]
and
\begin{equation} \label{conc1}
|2f(a_i+A) + 2f(a_{i+1}+A) - 2f(a_i+A) - f(a_{i+1}+A)|  \gg n^2 | \{ c_{j+1} - c_j : j \in I_1 \}|.
\end{equation}

Next, we make a second application of Lemma \ref{lem:squeeze2}. This time, we set
\[
Y=\{f(a_{i+1}+a_{3j} ) : 1 \leq  j  \leq \lfloor n/3 \rfloor \}, \,\,\,\, Z=\{f(a_{i+2}+a_{3j} ) : 1 \leq j \leq  \lfloor n/3 \rfloor \}, 
\]
\[
x_j=d_j, \,\,\,\, \text{and} \,\,\, I=I_1.
\]
It follows that there exists $I_2 \subset I_1$ such that
\[
|I_2| \gtrsim |I_1| \gtrsim n
\]
and
\begin{equation} \label{conc2}
|2f(a_{i+1}+A) + 2f(a_{i+2}+A) - 2f(a_{i+1}+A) - f(a_{i+2}+A) | \gg n^2 | \{ d_{j+1} - d_j : j \in I_2 \}|.
\end{equation}

For the third application of Lemma \ref{lem:squeeze2}, set
\[
Y=\{f(a_{i+2}+a_{3j} ) : 1 \leq j  \leq \lfloor n/3 \rfloor \}, \,\,\,\, Z=\{f(a_{i+3}+a_{3j} ) : 1 \leq j \leq  \lfloor n/3 \rfloor \}, 
\]
\[
x_j=e_j, \,\,\,\, \text{and} \,\,\, I=I_2.
\]
It follows that there exists $I_3 \subset I_2 \subset I_1$ such that
\[
|I_3| \gtrsim |I_2| \gtrsim n
\]
and
\begin{equation} \label{conc3}
|2f(a_{i+2}+A) + 2f(a_{i+3}+A) - 2f(a_{i+2}+A) - f(a_{i+3}+A) | \gg n^2 | \{ e_{j+1} - e_j : j \in I_3 \}|.
\end{equation}

Next, define
\[
\Gamma := \{ c_{j+1}-c_j : j \in I_3 \}, \,\,\,\,\,\,\, \Delta := \{ d_{j+1} - d_j : j \in I_3 \}, \,\,\,\, \mathcal E := \{ e_{j+1} - e_j : j \in I_3 \}.
\]

It now follows from \eqref{conc1}, \eqref{conc2} and \eqref{conc3}, along with the inclusions $I_3 \subset I_2 \subset I_1$ that there exist $a,a' \in A$ such that
\begin{equation} \label{concsum}
|2f(a+A) + 2f(a'+A) - 2f(a +A) -f(a'+A) | \gtrsim n^2 \max \{|\Gamma|, |\Delta|,  |\mathcal E| \}.
\end{equation}

The last task is for us to prove the following claim.

\begin{claim}
\[
\max \{|\Gamma|, |\Delta|,  |\mathcal E| \} \gg n^{7/12}.
\]
\end{claim}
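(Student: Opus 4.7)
The plan is to find a polynomial relation $F(P, Q, R) = 0$ satisfied by the exponentiated triples $(e^{c_{j+1}-c_j}, e^{d_{j+1}-d_j}, e^{e_{j+1}-e_j})$ for $j \in I_3$, and then apply Theorem~\ref{thm:11/6}, mirroring the strategy of Lemma~\ref{lem:main}.

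First I would substitute $u_j = e^{a_{3j}}$, $v_j = e^{a_{3(j+1)}}$, and $t_k = e^{a_{i+k}}$ for $k = 0, 1, 2, 3$, so that $f(x) = \ln(e^x + 1)$ yields
\[
P_j := e^{c_{j+1} - c_j} = \frac{(t_1 v_j + 1)(t_0 u_j + 1)}{(t_0 v_j + 1)(t_1 u_j + 1)},
\]
with analogous formulas for $Q_j, R_j$ (replacing $t_1$ by $t_2, t_3$ in the ``moving'' factors). The key telescoping is that the middle factors in $P_j Q_j$ and in $P_j Q_j R_j$ cancel, so $P_j, P_j Q_j, P_j Q_j R_j$ all have the common form $\varphi(t_k) := \tfrac{(t_k v_j + 1)(t_0 u_j + 1)}{(t_0 v_j + 1)(t_k u_j + 1)}$ for $k = 1, 2, 3$, where $\varphi$ is a M\"obius function of $t$ satisfying $\varphi(t_0) = 1$. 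Cross-ratio preservation by $\varphi$ then gives
\[
\frac{(1 - P_j Q_j)(P_j - P_j Q_j R_j)}{(1 - P_j Q_j R_j)(P_j - P_j Q_j)} = \frac{(t_0 - t_2)(t_1 - t_3)}{(t_0 - t_3)(t_1 - t_2)} =: K,
\]
a constant independent of $j$. Clearing denominators and cancelling a factor of $P_j$ produces the polynomial relation
\[
F(P, Q, R) := (1 - PQ)(1 - QR) - K(1 - PQR)(1 - Q) = 0,
\]
and since the $a_{i+k}$ are distinct, one checks $K \notin \{0, 1\}$.

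The hard part is verifying non-degeneracy of $F$ via Lemma~\ref{lem:test}: solve $F(P,Q,R) = 0$ for $R = g(P,Q)$, compute $\partial^2(\ln|g_P/g_Q|)/(\partial P\,\partial Q)$, and show this rational function is not identically zero, so that its zero set is a proper subvariety of $\mathbb{R}^2$ containing no open subset. The symbolic computation is unwieldy by hand but routine via computer algebra, consistent with the authors' earlier use of such tools. Alongside this one confirms $F$ is irreducible (or, if it factors, that the component carrying our triples is itself non-degenerate) and that the partials $F_P, F_Q, F_R$ are not identically zero.

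To finish, I would apply Theorem~\ref{thm:11/6} to $Z(F) \cap (\Gamma' \times \Delta' \times \mathcal{E}')$, where $\Gamma' := \{e^{\gamma} : \gamma \in \Gamma\}$ and similarly for $\Delta', \mathcal{E}'$, so that $|\Gamma'| = |\Gamma|$ and so on. Distinct $j \in I_3$ give distinct pairs $(u_j, v_j)$ (since $A$ is a set), and the map $(u, v) \mapsto (P, Q)$ has fibers of size at most $4$: each of $\{P = \text{const}\}$ and $\{Q = \text{const}\}$ is a conic in $(u, v)$, and two conics meet in at most four points by B\'ezout. Hence the triples $(P_j, Q_j, R_j)$ take $\gg |I_3| \gtrsim n$ distinct values, all on $Z(F)$, and Theorem~\ref{thm:11/6} gives
\[
n \lesssim \max\{|\Gamma|, |\Delta|, |\mathcal{E}|\}^{12/7},
\]
which rearranges to the claim $\max\{|\Gamma|, |\Delta|, |\mathcal{E}|\} \gg n^{7/12}$. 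The main obstacle is the non-degeneracy verification: the derivative~\eqref{eq:test} arising from $F$ is algebraically heavy, and showing its numerator is a nonzero polynomial is exactly where symbolic computation is indispensable.
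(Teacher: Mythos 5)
Your proposal is correct and follows the paper's overall strategy (double-count a system of equations, then bound solution triples via Theorem~\ref{thm:11/6}), but it finds the key polynomial relation by a genuinely different and considerably cleaner route. The paper writes the system for $e^\gamma, e^\delta, e^\varepsilon$ in terms of the $s_k = e^{a_k}$ and then runs Singular's \textsc{Eliminate} on the resulting ideal to eliminate $s_{3j}, s_{3(j+1)}$, producing a polynomial of the form $F(x,y,z)=(x-1)(z-1)\,G(x,y,z)$; it must then argue separately that no contribution has $x=1$ or $z=1$ in order to replace $F$ by the relevant factor $G$. Your observation that $1, P_j, P_jQ_j, P_jQ_jR_j$ are the images of $t_0,t_1,t_2,t_3$ under a single M\"obius map, so that the cross-ratio
\[
\frac{(1-PQ)(1-QR)}{(1-PQR)(1-Q)}=K=\frac{(t_0-t_2)(t_1-t_3)}{(t_0-t_3)(t_1-t_2)}
\]
is forced, produces exactly the paper's $G$ up to a nonzero scalar (multiply through by $(t_0-t_3)(t_1-t_2)$ to see this), while automatically sidestepping the spurious factor and dispensing with the Gr\"obner-basis computation. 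You are also more explicit than the paper about why the count $S=|I_3|$ can be transferred to a count of points in $e^\Gamma\times e^\Delta\times e^{\mathcal E}$: your B\'ezout bound of at most $4$ preimages $(u_j,v_j)$ per $(P,Q)$ fills a gap the paper leaves tacit when it writes $S \le |Z(G)\cap(\cdots)|$ (a constant-multiplicity bound is all that is needed, so this does not change the final exponent). Both routes ultimately defer the non-degeneracy check to a symbolic computation of $\partial^2(\ln|g_x/g_y|)/\partial x\,\partial y$; your simpler, symmetric form of $F$ should make that computation, and any check of irreducibility and non-vanishing partials, at least as tractable as the paper's.
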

Once this claim is proven, the proof of the theorem will follow from \eqref{concsum}. It remains to prove the claim.

\begin{proof}[Proof of Claim]

We will double count solutions to the system of equations
\begin{align} \label{system}
\gamma &=c_{j+1} - c_j, \nonumber
\\ \delta &= d_{j+1} - d_j,
\\ \varepsilon &= e_{j+1} - e_j, \nonumber
\end{align}
such that
\[
\gamma \in \Gamma, \,\, \delta \in \Delta, \,\, \varepsilon \in \mathcal E, \,\, j \in I_3.
\]
Let $S$ denote the number of solutions to this system. We note here that we do not get any solutions to this system with any of $\gamma$, $\delta$ or $\varepsilon$ equal to $0$. Indeed, this follows from our knowledge that $c_{j+1} > c_j$, $d_{j+1} > d_j$ and $e_{j+1} > e_j$. 

The easy task is to give a lower bound. Indeed, since each $j \in I_3$ gives a contribution to $S$, we have
\begin{equation} \label{lower}
S = |I_3| \gtrsim n.
\end{equation}
To obtain an upper bound for $S$, we will unravel the definitions which are used in \eqref{system}. We remark here that, until this point, we have not used the specified information that $f(x)= \ln (e^x+1)$. The only property of this function that we have used is that $f$ is strictly convex.

Plugging the definitions of $c_j,d_j$ and $e_j$ into \eqref{system}, we can rewrite it in the form
\begin{align} \label{system2}
\gamma&= f(a_{i+1} + a_{3(j+1)}) -  f(a_{i} + a_{3(j+1)}) - f(a_{i+1} + a_{3j}) +  f(a_{i} + a_{3j}), \nonumber
\\ \delta &= f(a_{i+2} + a_{3(j+1)}) -  f(a_{i+1} + a_{3(j+1)}) - f(a_{i+2} + a_{3j}) +  f(a_{i+1} + a_{3j}),
\\ \varepsilon &= f(a_{i+3} + a_{3(j+1)}) -  f(a_{i+2} + a_{3(j+1)}) - f(a_{i+3} + a_{3j}) +  f(a_{i+2} + a_{3j}). \nonumber
\end{align}
Recall that the elements $a_i,a_{i+1}, a_{i+2}$ and $ a_{i+3}$ are not variables here. They were fixed at the beginning of the proof when we chose close near-neighbors $a_i$ and $a_{i+3}$. Write $a_k= \ln(s_k)$ for all $1 \leq k \leq n$. We can finally use the information that $f(x)= \ln (e^x+1)$ to rewrite this system once again. After some rearranging, we obtain
\begin{align} \label{system3}
e^{\gamma}&=\frac{ (s_{i+1}s_{3(j+1)}+1)(s_{i}s_{3j}+1)}{ (s_is_{3(j+1)}+1)(s_{i+1}s_{3j}+1)},\nonumber
\\ e^{\delta}&=\frac{ (s_{i+2}s_{3(j+1)}+1)(s_{i+1}s_{3j}+1)}{ (s_{i+1}s_{3(j+1)}+1)(s_{i+2}s_{3j}+1)},
\\ e^{\varepsilon}&= \frac{ (s_{i+3}s_{3(j+1)}+1)(s_{i+2}s_{3j}+1)}{ (s_{i+2}s_{3(j+1)}+1)(s_{i+3}s_{3j}+1)}. \nonumber
\end{align}
After relabeling the variables, we seek an upper bound for the number of solutions to the system
\begin{align} \label{system33}
\bar{x}&=\frac{ (s_{i+1}s_{3(j+1)}+1)(s_{i}s_{3j}+1)}{ (s_is_{3(j+1)}+1)(s_{i+1}s_{3j}+1)},\nonumber
\\ \bar{y} &=\frac{ (s_{i+2}s_{3(j+1)}+1)(s_{i+1}s_{3j}+1)}{ (s_{i+1}s_{3(j+1)}+1)(s_{i+2}s_{3j}+1)},
\\ \bar{z} &= \frac{ (s_{i+3}s_{3(j+1)}+1)(s_{i+2}s_{3j}+1)}{ (s_{i+2}s_{3(j+1)}+1)(s_{i+3}s_{3j}+1)}, \nonumber
\end{align}
 such that
\[
\bar{x} \in e^{\Gamma}, \,\, \bar{y} \in e^{\Delta}, \,\, \bar{z} \in e^{\mathcal E}, \,\, j \in I_3.
\]
Since we do not have any solutions to \eqref{system} with any of $\gamma, \delta, \varepsilon = 0$, it follows that there are no solutions to \eqref{system33} with any of $\bar{x},\bar{y}$ or $\bar{z}$ equal to $1$.

For the goal of finding an upper bound to the number of solutions to \eqref{system33}, we use the Elekes-Szab\'{o} Theorem in much the same way as the proof of Lemma \ref{lem:main}. We first use Singular's \textsc{Eliminate} \cite{DGPS, KL} to compute the elimination ideal generated by the polynomials
\begin{align*}
(s_is_{3(j+1)}+1)(s_{i+1}s_{3j}+1)x&-(s_{i+1}s_{3(j+1)}+1)(s_{i}s_{j}+1),\\
(s_{i+1}s_{3(j+1)}+1)(s_{i+2}s_{3j}+1)y&-(s_{i+1}s_{3(j+1)}+1)(s_{i+2}s_{3j}+1),\\
(s_{i+2}s_{3(j+1)}+1)(s_{i+3}s_{3j}+1)z&-(s_{i+3}s_{3(j+1)}+1)(s_{i+2}s_{3j}+1),
\end{align*}
which eliminates $s_{3j}$ and $s_{3(j+1)}$ from this system and reduces it to a single polynomial in three variables $x, y$ and $z$. From there, we deduce that every contribution $(x,y,z,j)$ to $S$ gives rise to a solution to the polynomial equation $F(x,y,z)=0$, where 
\begin{equation} \label{Fdefn}
F(x,y,z):= \,(x-1)(z-1)\cdot G(x,y,z),
\end{equation} 
with $G$ defined to be
\begin{align*} \label{Gdefn}
G(x,y,z):= \, & (x y^2 z+1) (s_i-s_{i+1})(s_{i+2}-s_{i+3})\\ \nonumber
& + (xyz + y) (s_{i+2}-s_{i})(s_{i+1}-s_{i+3})\\ \nonumber
& + (yz + xy)(s_{i}-s_{i+3})(s_{i+1}-s_{i+2}).
\end{align*}
Since there are no contributions to $S$ with $x=1$ or $z=1$, it follows that every contribution $(x,y,z,j)$ to $S$ gives rise to a solution to the polynomial equation $G(x,y,z)=0$. Therefore, we have
\begin{equation} \label{middle}
S \leq |Z(G) \cap ( e^{\Gamma} \times e^{\Delta} \times e^{\mathcal E} )|.
\end{equation}
Note that all of the coefficients in $G$ are non-zero, since $s_i<s_{i+1}<s_{i+2}<s_{i+3}$. We now proceed to verify that $G$ is non-degenerate. In order to see this, we note that  $z$ can be expressed as a rational function of $x$ and $y$, i.e.,
\[
z=\frac{-x y(s_{i}-s_{i+3})(s_{i+1}-s_{i+2})-y(s_{i+2}-s_{i})(s_{i+1}-s_{i+3})-(s_i-s_{i+1})(s_{i+2}-s_{i+3})}
{xy^2 (s_i-s_{i+1})(s_{i+2}-s_{i+3})+xy(s_{i+2}-s_{i})(s_{i+1}-s_{i+3})+y(s_{i}-s_{i+3})(s_{i+1}-s_{i+2})}.
\]

Denote the right hand side of this expression to be $g(x,y)$ (note that this is $f(x,y)$ from Lemma \ref{lem:difftest} but we changed the name to distinguish it from the $f$ in this proof). Then the computation of \[\frac{\partial^2(\ln|g_x/g_y|)}{\partial x \partial y}\] turns out to be the rational function
\begin{equation}\label{eq:partial2g}
\dfrac{\begin{aligned}
-2\cdot \big(&\, (x^2 y^2 + 1)\left(s_i-s_{i+1}\right) \left(s_{i+1}-s_{i+2}\right) \left(s_i-s_{i+3}\right)\\
&+\, (x^2 y + y)\left(s_i-s_{i+2}\right) \left(s_{i+1}-s_{i+2}\right) \left(s_i-s_{i+3}\right) \left(s_{i+1}-s_{i+3}\right)\\
&+\, (2 x y) \left(s_{i+1}-s_{i+2}\right)^2 \left(s_i-s_{i+3}\right)^2
\big)\\[3pt]
\end{aligned}}
{
\begin{aligned}
\big(\,
(x^2 y^2 + 1)&(s_{i+1}-s_{i+2})(s_i-s_{i+3})
 - (x y^2 + x) (s_{i}-s_{i+2})(s_{i+1}-s_{i+3})\\
+\, (2 xy)&(s_{i}-s_{i+1})(s_{i+2}-s_{i+3})
- x (s_{i}-s_{i+2})(s_{i+1}-s_{i+3})
\,\big){}^2
\end{aligned}
}\, ,
\end{equation}
which is an expression that is zero when
\begin{equation}\label{eq:solvedegenerate}
y=\begin{cases}
r_1\pm \frac12 \sqrt{r_2}, & x\neq 0,\\
\dfrac{(s_{i}-s_{i+1})(s_{i+2}-s_{i+3})}{(s_{i}-s_{i+2})(s_{i+1}-s_{i+3})}, & x=0,
\end{cases}
\end{equation}
with
\vspace{3pt}

\begin{align*}
r_1&:=\frac{(x^2+1) \left(s_i-s_{i+2}\right) \left(s_{i+1}-s_{i+3}\right)-2x \left(s_{i+1}-s_{i+2}\right) \left(s_i-s_{i+3}\right)}{2x^2  \left(s_i-s_{i+1}\right) \left(s_{i+2}-s_{i+3}\right)},\\[10pt]
r_2&:=\frac{\begin{aligned}
& (x^2+1)\left(s_i s_{i+1}-s_{i+2} s_{i+1}+s_{i+2} s_{i+3}\right) \left(s_i s_{i+1}-s_{i+2} s_{i+1}-2s_i s_{i+3}+s_{i+2} s_{i+3}\right)\\
-&2x \left(s_i-s_{i+2}\right) \left(s_{i+1}-s_{i+3}\right) \left(s_i s_{i+1}+s_{i+2} s_{i+1}-2 s_{i+3} s_{i+1}-2 s_i s_{i+2}+s_i s_{i+3}+s_{i+2} s_{i+3}\right)
\end{aligned}}
{x^4 \left(s_i-s_{i+1}\right)^2 \left(s_{i+2}-s_{i+3}\right)^2}.
\end{align*}

The pairs $(x,y)$ that satisfy \eqref{eq:solvedegenerate} form a set of measure zero in $\R^2$. Hence, there are no open subsets in $\R^2$ for which \eqref{eq:partial2g} is identically zero. By Lemma \ref{lem:difftest}, we can conclude that $G$ is non-degenerate.

Since $G$ is non-degenerate, it follows from Theorem \ref{thm:11/6} that
\[
|Z(G) \cap (e^{\Gamma} \times e^{\Delta} \times e^{\mathcal E}) | \ll \max \{ |\Gamma|,|\Delta|,|\mathcal E| \}^{12/7}.
\]
Combining this with \eqref{middle} and \eqref{lower}, it follows that
\[
\max \{ |\Gamma|, |\Delta|, |\mathcal E| \} \gtrsim n^{7/12},
\]
as required.

\end{proof}

This completes the proof of Theorem \ref{thm:main22}.
\end{proof}

\section{Conclusions and Future Directions}

We conclude this paper by discussing a few variations of Theorem~\ref{thm:main22}, in which we make different choices for the function $f$. We observe that most of the proof uses only the fact that $f$ is a strictly convex function. However, we eventually need to know something more concrete about $f$ in order to calculate $F$ in the form of \eqref{Fdefn}, and to check whether or not it is degenerate.

Firstly, we note that we can repeat the proof of Theorem \ref{thm:main22} with minimal changes for the case when $f(x)=\ln(e^x+\lambda)$ and $\lambda$ is a strictly positive real number. That is, we have the following theorem.

\begin{theorem} \label{thm:main222}
Let $A \subset \mathbb R$ and let $\lambda>0$ be a real number. Define the function $f: \mathbb R \rightarrow \mathbb R$ by $f(x)= \ln( e^x+\lambda)$. Then there exist $a,a' \in A$ such that
\[
|2f(a+A)+2f(a'+A) - 2 f(a+A) - f(a'+A)| \gtrsim |A|^{31/12}.
\]
\end{theorem}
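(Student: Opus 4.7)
Given a finite $A\subset\mathbb R$ and $\lambda>0$, the plan is to deduce Theorem~\ref{thm:main222} directly from Theorem~\ref{thm:main22} by a translation, rather than rerun the Elekes--Szab\'{o} calculation. Writing $c:=\ln\lambda$, the algebraic identity
\[
\ln(e^x+\lambda)=\ln\lambda+\ln(e^{x-\ln\lambda}+1)
\]
expresses $f(x)=g(x-c)+c$, where $g(x):=\ln(e^x+1)$ is the function treated in Theorem~\ref{thm:main22}. Thus $f$ differs from $g$ only by a horizontal and a vertical shift.

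I would introduce the auxiliary set $B:=A-c/2$ (so that $|B|=|A|$), and for each $a\in A$ the associated element $\alpha:=a-c/2\in B$. A direct check gives $\alpha+B=(a-c)+A$, and therefore $g(\alpha+B)=g((a-c)+A)=f(a+A)-c$; in other words, $f(a+A)$ is the additive translate $g(\alpha+B)+c$. Since the coefficient balance of the target expression $2f(a+A)+2f(a'+A)-2f(a+A)-f(a'+A)$ is $2+2-2-1=1$, substituting $f=g(\,\cdot\,-c)+c$ collects the various occurrences of $c$ into a single overall translation by $c$, so the set in question is a translate of
\[
2g(\alpha+B)+2g(\alpha'+B)-2g(\alpha+B)-g(\alpha'+B),
\]
and in particular has the same cardinality. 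Applying Theorem~\ref{thm:main22} to $B$ then produces $\alpha,\alpha'\in B$ for which this cardinality is $\gtrsim|B|^{31/12}=|A|^{31/12}$, and taking $a:=\alpha+c/2$, $a':=\alpha'+c/2\in A$ finishes the argument.

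If one prefers to repeat the proof of Theorem~\ref{thm:main22} directly, as the paper's phrasing suggests, strict convexity of $f$ holds for every $\lambda>0$, so the squeezing machinery and dyadic pigeonholing, including the three applications of Lemma~\ref{lem:squeeze2}, are unaffected. The only place where the value $\lambda=1$ was used nontrivially is the passage from \eqref{system2} to \eqref{system3}: with general $\lambda$ each factor $s_is_{3j}+1$ becomes $s_is_{3j}+\lambda$, but the substitution $t_k:=s_k/\sqrt{\lambda}$ rewrites this as $\lambda(t_it_{3j}+1)$, and the resulting factors of $\lambda$ cancel in the numerator and denominator of each of the three ratios, restoring \eqref{system33} exactly. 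Consequently the polynomial $F=(x-1)(z-1)G$ and the verification that $G$ is non-degenerate carry over verbatim, and the Elekes--Szab\'{o} step yields $\max\{|\Gamma|,|\Delta|,|\mathcal E|\}\gtrsim n^{7/12}$ as before. The only thing genuinely worth checking on this route is this cancellation, which is immediate from the fact that each ratio is homogeneous of degree zero in the $s$ variables; there is no real obstacle.
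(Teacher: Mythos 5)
Both routes you offer are correct. The paper does not actually spell out a proof of Theorem~\ref{thm:main222}: it just remarks that one can ``repeat the proof of Theorem~\ref{thm:main22} with minimal changes,'' which is precisely your second route. Your observation there is the right one: after setting $a_k=\ln s_k$, each ratio of the form
\[
\frac{(s_{i+1}s_{3(j+1)}+\lambda)(s_{i}s_{3j}+\lambda)}{(s_{i}s_{3(j+1)}+\lambda)(s_{i+1}s_{3j}+\lambda)}
\]
is homogeneous of degree zero in the $s$-variables, so the substitution $t_k=s_k/\sqrt{\lambda}$ (valid since $\lambda>0$ and $s_k=e^{a_k}>0$) cancels all the $\lambda$'s and recovers the $\lambda=1$ system \eqref{system33} verbatim, with $s_k$ replaced by $t_k$ in the coefficients of $G$. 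As the only property of those coefficients used in the non-degeneracy check is that $t_i<t_{i+1}<t_{i+2}<t_{i+3}$, nothing else changes.

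Your first route, however, is genuinely different and cleaner than what the paper suggests: the identity $\ln(e^x+\lambda)=\ln\lambda+\ln(e^{x-\ln\lambda}+1)$ exhibits $f$ as $g(\,\cdot\,-c)+c$ with $c=\ln\lambda$, and the translation $B:=A-c/2$ gives $f(a+A)=g(\alpha+B)+c$ with $\alpha:=a-c/2$. Since the coefficient balance in $2Y+2Z-2Y-Z$ is $2+2-2-1=1$, the target set is exactly a translate (by $c$) of the corresponding set for $g$ and $B$, and Theorem~\ref{thm:main22} applies directly to $B$. This avoids re-running the Elekes--Szab\'{o} computation entirely and makes the ``minimal changes'' claim a formal consequence rather than something to be re-verified. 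It is worth noting that this reduction would not work if the signed coefficient sum were $0$, since then the $c$'s would cancel and you would get equality of the two sets rather than a translate---but the sum is $1$ here, so the argument is sound.
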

This in turn implies an analog of Theorem \ref{thm:main2}: for any $X \subset  \mathbb R$ and any $\lambda>0$, there exist $x,x' \in X$ such that
\[
\left | \frac{(xX+\lambda)^{(2)}(x'X+\lambda)^{(2)}}{(xX+\lambda)^{(2)}(x'X+\lambda)} \right | \gtrsim |X|^{31/12}.
\]
We can also prove a similar result for $\lambda < 0$, although a little more care is needed to ensure we are not attempting to take logarithms of negative values. 

We attempted to prove a version of Theorem~\ref{thm:main22} with $f(x)=x^3$ and $f(x)=x^4$. For the case $f(x)=x^3$, we discovered that the corresponding polynomial $F(x,y,z)$ is linear in each of the three variables and thus degenerate, so Theorem \ref{thm:11/6} could not be applied. For the case when $f(x)=x^4$, we calculated that the corresponding polynomial $F(x,y,z)$ is quartic in all 3 variables, and we were not able to verify that this polynomial is non-degenerate using Lemma~\ref{lem:test} as \eqref{eq:test} turned out to be too computationally complex. Nevertheless, we expect that such results for $f(x)=x^n$ with $n\geq 3$ are true. 

These cases illustrate a general problem with the method: on one hand, we have a useful derivative test in the form of Lemma \ref{lem:test} to check whether a given polynomial $F(x,y,z)$ satisfies the non-degeneracy conditions of Theorem~\ref{thm:11/6}. On the other hand, when it either gets too computationally complex (as it often does) or we get no useful information because the second derivative is identically zero, the use of alternative strategies tend to be more ad-hoc and adapted according to the underlying geometric properties of the problem at-hand. Thus, a reasonable direction of research could be to identify another systematic way in which we could determine non-degeneracy of polynomials in the sense of Definition~\ref{def:nondegen} for the purpose of taking advantage of the result of Elekes and Szab\'{o}.

\section*{Acknowledgements}

O.~Roche-Newton was supported by the Austrian Science Fund FWF Project P34180. E.~Wong was mostly supported by the Austrian Academy of Sciences at the Radon Institute for Computational and Applied Mathematics (RICAM) during the writing and preparation of this article. We are grateful to Orit Raz and Audie Warren for helpful input when putting together this paper.


\end{document}